\newtheorem{theorem}{Theorem}[section]
\newtheorem{lemma}[theorem]{Lemma}
\newtheorem{Conjecture}[theorem]{Conjecture}
\theoremstyle{definition}
\newtheorem{definition}[theorem]{Definition}
\newtheorem{corollary}[theorem]{Corollary}
\theoremstyle{remark}
\numberwithin{equation}{section}
\newcommand{\bdot}{\boldsymbol{\cdot}}
\begin{document}
	
	\title[On some zero-sum invariants for abelian groups of rank three]
	{On some zero-sum invariants for abelian groups of rank three}
	
	\begin{abstract}
		Let $G$ be an additive finite abelian
		group with exponent $\exp(G)$.
		For $L\subseteq \mathbb N$, let $\mathsf{s}_{L}(G)$ be the smallest integer $\ell$ such that every sequence $S$ over $G$ of length $\ell$ has a zero-sum subsequence $T$ of length $|T|\in L$.
		In this paper, we consider the invariants $\mathsf{s}_{[1,t]}(G)$ and $\mathsf{s}_{\{k\exp(G)\}}(G)$ (with $k\in \mathbb N$). We obtain precise values as well as upper bounds of the above invariants for some abelian groups of rank three. Some of these results improve previous results of Gao-Thangadurai and Han-Zhang.

	\end{abstract}
	
	\author{Shiwen Zhang}
	\address{School of Mathematics (Zhuhai), Sun Yat-sen University, Zhuhai 519082, Guangdong, P.R. China}
	\email{zhangshw9@mail2.sysu.edu.cn}
	
	\thanks{}

	\keywords{}
	\maketitle
	\section{Introduction}

	Let $G$ be an additive finite abelian group with exponent $\exp(G)$. Let $S=g_1\bdot\ldots\bdot g_\ell$ be a sequence over $G$ (unordered and repetition is allowed), where $g_i\in G$ for $1\le i\le \ell$. We denote by $|S|:=\ell$, which is called the length of the sequence $S$. We call $S$ a zero-sum sequence if $\sigma(S):=\sum^\ell_{i=1}g_i=0$. The essential idea of the direct zero-sum theory is that a sequence $S$ with enough elements will contain a zero-sum subsequence with prescribed properties. For example, in 1961, Erd\H{o}s, Ginzburg and Ziv \cite{EGZ} proved that from every sequence $S$ over an abelian group of order $n$ of length $2n-1$, we can always find a zero-sum subsequence $T$ of length $n$ (see \cite{ADZ} for other different proofs).
	For $L\subseteq \mathbb N$, let $\mathsf{s}_{L}(G)$ be the smallest integer $\ell$ (if there exists) such that every sequence $S$ over $G$ of length $\ell$ has a zero-sum subsequence $T$ of length $|T|\in L$.
	\begin{definition}
		We denote
		\begin{itemize}
			\item $\mathsf D(G):=\mathsf{s}_{\mathbb N}(G)$, which is called the Davenport constant of $G$;
			\item $\mathsf{s}_{k\exp(G)}(G):=\mathsf{s}_{\{k\exp(G)\}}(G)$ (with $k\in \mathbb N$), which is called the $k$-th Erd\H{o}s-Ginzburg-Ziv constant of $G$;
			\item $\mathsf{s}_{\le t}(G):=\mathsf{s}_{[1,t]}(G)$ (for some $t$ with $\exp(G)\le t\le \mathsf D(G)$). In particular, we denote $\eta(G):=\mathsf{s}_{\le \exp(G)}(G)$, which is called the $\eta$-constant of $G$.
		\end{itemize}
	\end{definition}
	
	The above invariants have received a lot of attention,  we refer to \cite{[GaoG]} for a survey of zero-sum theory. We shall focus on $\mathsf{s}_{k\exp(G)}(G)$ and $\mathsf{s}_{\le t}(G)$ in this paper. When $k=1$, $\mathsf s(G):=\mathsf s_{\exp(G)}(G)$ is the famous Erd\H{o}s-Ginzburg-Ziv constant. So far, roughly speaking, precise values of $\mathsf s(G)$ have been obtained only for groups of rank at most two and few groups of higher rank; see, e.g., \cite{AD,EEGKR,EllG,FGZ,FanZ,FoxS,BGS,Hege,Kem,Lisa,LisaZa,Nas,[Rei],Zakh} (in particular, $\mathsf s(C_3^r)$ is related to the famous cap-set problem). 
	
	It is easy to verify that $\mathsf s_{k\exp(G)}(G)\ge k\exp(G)+\mathsf D(G)-1$ holds for every $k\ge1$; see \cite{Gao1}. In 1996, Gao \cite{Gao3} proved that
	$\mathsf s_{k\exp(G)}(G)= k\exp(G)+\mathsf D(G)-1$,
	provided that $k\exp(G)\ge |G|$. In 2006, Gao and Thangadurai \cite{GaoRT} showed that if $k\exp(G)<\mathsf D(G)$ then $\mathsf s_{k\exp(G)}(G)> k\exp(G)+\mathsf D(G)-1$. Recently, Gao, Han, Peng and Sun \cite{GHPS}
	proposed the following conjecture.
	\begin{Conjecture}\label{GHPSC}
		Let $G$ be a finite abelian group. If $k\exp(G)\ge \mathsf D(G)$, then we have
		$\mathsf s_{k\exp(G)}(G)= k\exp(G)+\mathsf D(G)-1$.
	\end{Conjecture}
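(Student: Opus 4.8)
The plan is to reduce to the upper bound and attack it by induction on $k$. Write $n=\exp(G)$ and $D=\mathsf D(G)$. The lower bound $\mathsf s_{kn}(G)\ge kn+D-1$ is already recorded in the excerpt; it comes from the sequence $0^{kn-1}\bdot T$, where $T$ is a zero-sum free sequence of maximal length $D-1$, since any zero-sum subsequence uses only the copies of $0$ and so none has length exactly $kn$. Thus it suffices to prove the matching upper bound: every sequence $S$ over $G$ with $|S|=kn+D-1$ contains a zero-sum subsequence of length exactly $kn$.

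For the induction, the cleanest target is the step inequality $\mathsf s_{(k+1)n}(G)\le \mathsf s_{kn}(G)+n$. Given $S$ with $|S|=\mathsf s_{kn}(G)+n$, I would first extract a zero-sum subsequence $T$ of length $kn$ (possible since $|S|\ge \mathsf s_{kn}(G)$), and then enlarge it by a zero-sum subsequence $W$ of length exactly $n$ drawn from $ST^{-1}$, so that $TW$ has length $(k+1)n$. This succeeds as soon as $|ST^{-1}|=\mathsf s_{kn}(G)+n-kn\ge \mathsf s(G)$. Combined with a base case at the least $k_0$ with $k_0n\ge D$ (which is itself the hardest instance), the step inequality would then propagate the equality to all larger $k$.

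The difficulty is that the naive enlargement requires $\mathsf s_{kn}(G)\ge \mathsf s(G)+(k-1)n$, and since $\mathsf s(G)\ge\eta(G)+n-1$ while the lower bound only gives $\mathsf s_{kn}(G)\ge kn+D-1$, the argument falls short by a margin governed by $\eta(G)-D$. For cyclic groups $\eta(G)=D=n$, there is no deficit, and one recovers the classical $kn$-version of Erd\H{o}s--Ginzburg--Ziv; but for every group of rank at least two $\eta(G)$ strictly exceeds $D$, and this deficit is the true obstacle. To fight it I would abandon single-step enlargement in favour of a staircase argument: extract short zero-sum subsequences $T_1,T_2,\ldots$ of lengths in $[1,n]$ (each exists while the remainder has length $\ge\eta(G)$), and track the partial length sums $L_i=\sum_{j\le i}|T_j|$. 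Consecutive values differ by at most $n$, so if the total reaches $kn$ the staircase lands on some $L_{i^*}\in[kn,\,kn+n-1]$, producing a zero-sum subsequence of length $kn+r$ with $0\le r<n$. Both that the staircase reaches $kn$ at all, and the task of forcing $r=0$ — by choosing the $T_j$ to have length exactly $n$ when possible, or by excising a zero-sum subsequence of length $r$ — re-encounter the same $\eta(G)-D$ deficit, and this exact-length control, in the regime $D\le kn<|G|$ lying strictly between the trivial range and Gao's $kn\ge|G|$ theorem, is where the real work sits.

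For the rank-three groups that are the focus of the paper, I expect this obstacle to be surmountable by exploiting the explicit structure $G\cong C_{n_1}\oplus C_{n_2}\oplus C_{n_3}$ with $n_1\mid n_2\mid n_3=n$. Concretely, I would induct by projecting $G$ onto a quotient by one cyclic summand, apply the known values of $\mathsf D$, $\eta$, and the Erd\H{o}s--Ginzburg--Ziv constants for the rank-$\le 2$ quotient and fibres, and then reassemble zero-sum subsequences of controlled length across the fibres; the divisibility $n_1\mid n_2\mid n$ is what should let one absorb the excess $r$ and close the exact-length gap. The main obstacle remains this fibrewise length bookkeeping, which is precisely why the general conjecture stays open and why only specific rank-three families can be resolved.
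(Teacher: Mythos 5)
This statement is a \emph{conjecture}, not a theorem of the paper: it remains open in general, and the paper only verifies new special cases of it (Theorem \ref{3-group-rank3}, for $C_{3^n}^3$ with $k=3,5$). Your proposal, as you yourself concede at two separate points, is a strategy sketch rather than a proof, so it cannot be accepted as a proof of the statement. The gaps you acknowledge are genuine and fatal. First, the step inequality $\mathsf s_{(k+1)n}(G)\le \mathsf s_{kn}(G)+n$ requires extracting a zero-sum subsequence of length exactly $n$ from the remainder $ST^{-1}$, which has length only $\mathsf s_{kn}(G)-kn+n$; if the conjectured value $\mathsf s_{kn}(G)=kn+\mathsf D(G)-1$ holds, this is $\mathsf D(G)+n-1$, and the extraction needs $\mathsf D(G)+n-1\ge \mathsf s(G)$. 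Since $\mathsf s(G)\ge \eta(G)+n-1$ and $\eta(G)>\mathsf D(G)$ for the rank-two and rank-three groups at issue (e.g.\ $\eta(C_n^2)=3n-2$ versus $\mathsf D(C_n^2)=2n-1$), the inequality fails by the margin you identify, and no amount of reshuffling the same counting closes it. Second, the staircase variant produces a zero-sum subsequence of length in $[kn,kn+n-1]$ but has no mechanism to force the length to be exactly $kn$; excising a zero-sum subsequence of length $r$ from a zero-sum sequence re-encounters the same deficit. Third, the concluding paragraph on rank-three groups (``I expect this obstacle to be surmountable\dots'') is a statement of hope, not an argument: no lemma is formulated, and the ``fibrewise length bookkeeping'' is never carried out.

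It is instructive to compare with what the paper actually does for the cases it settles. The enlargement idea you propose does appear there, but only in a safe regime: in Case 2 of the proof of Theorem \ref{3-group-rank3}, a zero-sum subsequence of length $2\cdot 3^n$ is split off (using Lemma \ref{lem5}, itself proved by the projection-to-quotient argument you gesture at, anchored on Harborth's value $\mathsf s(C_3^3)=19$), and the remainder is handled by the $k=3$ case. The crucial point is that the hardest case, $k=3$, is \emph{not} proved by any extraction or induction on $k$; it is proved by a counting argument modulo $p$ — applying Olson's congruence (Lemma \ref{lemma1} and Corollary \ref{cor1}) to all subsequences of a fixed length and double-counting — which directly forces $N^{3\cdot 3^n}(S)\not\equiv 1\pmod 3$, hence the existence of a zero-sum subsequence of the exact prescribed length. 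This algebraic tool is precisely what substitutes for the exact-length control your combinatorial staircase cannot supply, and its absence is the core gap in your proposal.
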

	Note that, for groups of the form $C_n^r$, the precise values of their Davenport constant are unknown (with the conjecture $\mathsf D(C_n^r)=r(n-1)+1$). In this case, Kubertin \cite{K} conjectured that
	$\mathsf s_{kn}(C_n^r)=(k+r)n-r$. Conjecture \ref{GHPSC} has been verified for abelian $p$-groups $G$ with $\mathsf D(G)\le 4\exp(G)$ with the restriction that $p\ge 5$ (very recently, this result (also for $p\ge 5$ in the rank $3$ case) was reproved with a new approach by Grynkiewicz \cite{DGr}). Now, we focus on the cases $p=2$ or $3$.
	Gao and Thangadurai \cite{GaoRT} proved that
	\begin{itemize}
		\item $\mathsf{s}_{k\cdot 2}(C_{2}^3)=2k+3$, where $k\ge 2$;
		\item $\mathsf{s}_{2\cdot 3}(C_{3}^3)=13$, $15\le\mathsf{s}_{3\cdot 3}(C_{3}^3)\le 17$, $\mathsf{s}_{k\cdot 3}(C_{3}^3)=3k+6$, where $k\ge 4$.
	\end{itemize}
	Moreover, in \cite{HZ2}, Han and Zhang proved that
	\begin{itemize}
		\item $\mathsf{s}_{k\cdot 2^n}(C_{2^n}^3)=(k+3)2^n-3$, where $k\ge 4$,
		\item $\mathsf{s}_{k\cdot 3^n}(C_{3^n}^3)=(k+3)3^n-3$, where $k\ge 6$.
	\end{itemize}
	Consequently, they obtained the following asymptotically tight bound
	\begin{equation}\label{HanZhangA}
		\mathsf s_{kn}(C_n^3)=(k+3)n+O(\frac{n}{\ln n}),\quad\text{where $k\ge 6$.}
	\end{equation}
	Therefore, for groups of the form $C_{p^n}^3$ (with $p\in\{2,3\}$), Conjecture \ref{GHPSC} remains open for the following cases:
	\begin{itemize}
		\item $p=2$: $k=3$ and $n\ge 2$;
		\item $p=3$: ($k=3$, $n\ge 1$), ($k=4$, $n\ge 2$), and ($k=5$, $n\ge 2$).
	\end{itemize}
	In this paper, we consider the case $p=3$ and prove the following result.
	\begin{theorem} \label{3-group-rank3}For any $n\ge 1$, we have
		$$\mathsf{s}_{k\cdot 3^n}(C_{3^n}^3)=(k+3)3^n-3$$
		for $k=3$ and $5$.
	\end{theorem}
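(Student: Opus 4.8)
Write $G=C_{3^n}^3$, so that $\exp(G)=3^n$, and recall that for the elementary $3$-group filtration the Davenport constant is well known to be $\mathsf D(G)=3(3^n-1)+1=3\cdot 3^n-2$. Thus the asserted value $(k+3)3^n-3$ is exactly $k\exp(G)+\mathsf D(G)-1$, and the lower bound $\mathsf s_{k\cdot 3^n}(G)\ge (k+3)3^n-3$ is immediate from the general inequality $\mathsf s_{k\exp(G)}(G)\ge k\exp(G)+\mathsf D(G)-1$ recorded in \cite{Gao1}. The entire task is therefore the matching upper bound: every sequence $S$ over $G$ with $|S|=(k+3)3^n-3$ must contain a zero-sum subsequence of length exactly $k\cdot 3^n$. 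The plan is to prove this by induction on $n$.

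For the base case $n=1$ (the group $C_3^3$) the value $k=5$ is already contained in Gao--Thangadurai \cite{GaoRT}, who showed $\mathsf s_{k\cdot 3}(C_3^3)=3k+6$ for all $k\ge 4$; at $k=5$ this reads $\mathsf s_{15}(C_3^3)=21=(5+3)3-3$. The genuinely new input is $k=3$, that is $\mathsf s_{9}(C_3^3)\le 15$, which sharpens the bound $\mathsf s_{9}(C_3^3)\le 17$ of \cite{GaoRT}. I would establish this by a direct structural analysis over $C_3^3$: assuming a sequence $S$ with $|S|=14$ and no zero-sum subsequence of length $9$, one repeatedly extracts short zero-sum blocks (of length in $[1,3]$) governed by $\eta(C_3^3)=\mathsf s_{\le 3}(C_3^3)$, and then argues that the multiset of block lengths, together with the zero-sum constraint, forces a sub-collection of total length \emph{exactly} $9$, a contradiction. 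Arranging the block lengths so that they can be assembled to exactly $9$ is the delicate point, and is where the improvement from $17$ to $15$ must come from.

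For the inductive step I would pass to the subgroup $H=3^{n-1}G\cong C_3^3$ with quotient $\bar G=G/H\cong C_{3^{n-1}}^3$ and projection $\pi\colon G\to\bar G$, noting $\exp(\bar G)=3^{n-1}$ and the factorisation $k\cdot 3^n=3\cdot(k\cdot 3^{n-1})$. From $S$ I would extract disjoint subsequences $B_1,B_2,\dots$ whose projections $\pi(B_i)$ are short zero-sum sequences in $\bar G$, so that $\sigma(B_i)\in H$; the number of blocks one can guarantee is controlled by the sharp values of $\mathsf s_{\le t}(\bar G)$ and $\eta(\bar G)$ proved elsewhere in the paper, and it is precisely these sharp values that keep the extraction from being wasteful. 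Recording each $\sigma(B_i)\in H\cong C_3^3$ together with the length $|B_i|$, I would then select a sub-collection that is simultaneously zero-sum in $C_3^3$ and of total length exactly $k\cdot 3^n$, using the inductive hypothesis on $\bar G$ and the Davenport/Erd\H os--Ginzburg--Ziv behaviour of $C_3^3$ to control, respectively, the sum and the length.

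The hard part will be this final simultaneous control of the $H$-sum and of the total length. A plain greedy extraction loses an additive term of size $\mathsf s(C_3^3)$, which is far too large to reach the sharp constant; so the argument must instead feed the sharp $\mathsf s_{\le t}$-bounds into the block extraction and then solve a \emph{weighted} zero-sum selection over $C_3^3$ in which both the sum in $C_3^3$ and the total length are prescribed. I expect this double bookkeeping to be the decisive obstacle, and it is natural that it succeeds only for certain residues of $k$: this is consistent with $k=4$ remaining open, since the even value is neither reachable downward from the known range $k\ge 6$ of Han--Zhang \cite{HZ2} nor, apparently, by the step-of-two mechanism that carries the new base case $k=3$ up to $k=5$.
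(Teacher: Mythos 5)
Your proposal correctly disposes of the lower bound (Gao's inequality, Lemma \ref{lem4}) and of the case $k=5$, $n=1$ (Gao--Thangadurai), but everything beyond that is a plan rather than a proof, and you say so yourself: the base case $\mathsf s_{9}(C_3^3)\le 15$ is left as ``a direct structural analysis'' whose delicate point you explicitly do not resolve, and the inductive step's simultaneous control of the $H$-sum and of the total length is named as the decisive obstacle without any mechanism to overcome it. These are not details. The wastefulness of greedy block extraction that you point out is exactly why a naive induction on $n$ cannot reach the sharp constant $k\exp(G)+\mathsf D(G)-1$, so the two gaps you flag constitute the entire content of the theorem.

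The missing idea is that no induction on $n$ is needed for $k=3$: the sharp bound follows from a single global counting argument. Given $S$ over $C_{3^n}^3$ with $|S|=6\cdot 3^n-3$, one applies Olson-type congruences in the form of Corollary \ref{cor1} to \emph{every} subsequence $T$ of $S$ of length $4\cdot 3^n-3$ (this is the threshold $\mathsf D^*(G)+3^n-1$), obtaining
$$1-N^{3^n}(T)+N^{2\cdot 3^n}(T)-N^{3\cdot 3^n}(T)\equiv 0 \pmod 3,$$
and sums over all such $T$. A zero-sum subsequence of $S$ of length $j\cdot 3^n$ lies in exactly $\binom{|S|-j\cdot 3^n}{|T|-j\cdot 3^n}$ of these $T$, and by Lucas' theorem (Lemma \ref{lemma3}) these multiplicities vanish modulo $3$ for $j=1,2$ and equal $1$ for $j=3$, while the constant term contributes $\binom{|S|}{|T|}\equiv 1$; the whole sum collapses to $1-N^{3\cdot 3^n}(S)\equiv 0\pmod 3$, forcing a zero-sum subsequence of length exactly $3\cdot 3^n$, uniformly in $n$. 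The case $k=5$ then requires no sharpness at all in the extraction step you worry about: by Lemma \ref{lem5}, the crude bound $\mathsf s_{2\cdot 3^n}(C_{3^n}^3)\le 7\cdot 3^n-8$ (proved by precisely the projection argument you describe, in a setting where the waste is affordable), a sequence of length $8\cdot 3^n-3$ contains a zero-sum block of length $2\cdot 3^n$; the remaining $6\cdot 3^n-3$ terms contain a zero-sum block of length $3\cdot 3^n$ by the case $k=3$, and the union has length $5\cdot 3^n$. This is the ``step-of-two mechanism'' you gesture at, but note that it runs through removing a block of length $2\cdot 3^n$ and invoking the $k=3$ result on the exact remainder, not through lifting base cases along an induction on $n$; your intuition that $k=4$ is out of reach of this method is nevertheless correct, since the counting identity collapses so cleanly only at length $3\cdot 3^n$.
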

	As a corollary, following the same approach in \cite{HZ2}, we have
	$\mathsf s_{kn}(C_n^3)=(k+3)n+O(\frac{n}{\ln n})$ (where $k\ge 5$), which improves the above result (\ref{HanZhangA}) of Han and Zhang. We also refer to \cite{GHP,HZ1,HZ2,HZ3,He,K,S1,S2} for some recent studies on $\mathsf s_{k\exp(G)}(G)$ and, in particular, their connections with extremal graph theory and coding theory (see \cite{S1,S2}).

	
	Next, we consider the invariant $\mathsf s_{\le t}(G)$.
	Note that, the invariants $\mathsf s_{k\exp(G)}(G)$ and $\mathsf s_{\le t}(G)$ are closely related. Gao, Han, Peng and Sun \cite{GHPS} conjectured that, for any $k\ge 1$, we have
	\begin{equation}\label{GHPSCon}	
		\mathsf{s}_{k\exp(G)}(G)=\mathsf s_{\le k\exp(G)}(G)+k\exp(G)-1.
	\end{equation}
	The special case ``$k=1$" of (\ref{GHPSCon}) is the well-known conjecture that $\mathsf{s}(G)=\eta(G)+\exp(G)-1$; see \cite{FGWZ,GHZ} for some recent studies.
	If $t<\exp(G)$, then $\mathsf s_{\le t}(G)$ does not exist (consider a sequence $S$ consists of copies of a fixed element of order $\exp(G)$). If $t\ge\mathsf D(G)$, then we have $\mathsf s_{\le t}(G)=\mathsf D(G)$ by definition. Therefore, it suffices to study $\mathsf s_{\le t}(G)$ for $\exp(G)\le t\le \mathsf D(G)$. It is easy to see that $\mathsf D(C_n)=n$ and $\mathsf s_{\le n}(C_n)=\eta(C_n)=n$. For abelian groups of rank 2, Wang and Zhao \cite{WZ} proved that $\mathsf{s}_{\le \mathsf D(G)-k}(G)= \mathsf D(G)+k$, where $0\le k\le \mathsf D(G)-\exp(G)$. Roy and Thangadurai \cite{RoyT} also considered this problem for abelian $p$-groups $G$ satisfying $\mathsf D(G)\le 2\exp(G)-1$ (which are essentially of rank 2). In this paper, we study $\mathsf s_{\le t}(G)$ for abelian $p$-group $G$ of rank at least 3. It is known that $\mathsf{s}_{\le \mathsf D(G)-1}(G)= \mathsf D(G)+1$ for any finite abelian group of rank at least 2; see \cite[Lemma 8]{WZ}. We prove the following stronger result for groups of the form $C_p^r$ (with $p$ prime, $3\leq r<p$).
	\begin{theorem} \label{work2}
		Let $p$ be a prime, $r$ be a positive integer, $3\leq r<p$ and $G=C_p^r$. Then we have
		$$\mathsf{s}_{\le \mathsf D(G)-2}(G)= \mathsf D(G)+1.$$
	\end{theorem}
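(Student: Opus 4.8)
\smallskip
\noindent\textit{Plan of proof.}
The plan is to prove $\mathsf{s}_{\le \mathsf D(G)-2}(G)\ge \mathsf D(G)+1$ and $\mathsf{s}_{\le \mathsf D(G)-2}(G)\le \mathsf D(G)+1$ separately, using throughout that $\mathsf D(C_p^r)=r(p-1)+1$. For the lower bound I would exhibit one bad sequence of length $\mathsf D(G)$. Fix a basis $e_1,\dots,e_r$ of $G=C_p^r$ and set $S_0=e_1^{[p-1]}\bdot\cdots\bdot e_r^{[p-1]}\bdot(e_1+\cdots+e_r)$, which has length $r(p-1)+1=\mathsf D(G)$. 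Since $e_1^{[p-1]}\bdot\cdots\bdot e_r^{[p-1]}$ is zero-sum free, every nonempty zero-sum subsequence of $S_0$ must contain the last term; writing such a subsequence as $(e_1+\cdots+e_r)\bdot e_1^{[a_1]}\bdot\cdots\bdot e_r^{[a_r]}$ with $0\le a_i\le p-1$ and reducing each coordinate modulo $p$ forces $a_i=p-1$ for all $i$. Hence $S_0$ is itself the only nonempty zero-sum subsequence, so it has none of length $\le \mathsf D(G)-2$, giving $\mathsf{s}_{\le \mathsf D(G)-2}(G)>|S_0|=\mathsf D(G)$. This construction needs no hypothesis on $r$ or $p$.

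For the upper bound, suppose toward a contradiction that some $S$ with $|S|=\mathsf D(G)+1$ has no zero-sum subsequence of length in $[1,\mathsf D(G)-2]$. Let $T$ be a shortest nonempty zero-sum subsequence of $S$. By the known equality $\mathsf{s}_{\le \mathsf D(G)-1}(G)=\mathsf D(G)+1$ (\cite[Lemma 8]{WZ}) we have $|T|\le \mathsf D(G)-1$, while the assumption gives $|T|\ge \mathsf D(G)-1$; hence $|T|=\mathsf D(G)-1=r(p-1)$ and $T$ is a \emph{minimal} zero-sum sequence, since any proper nonempty zero-sum subsequence would be shorter. Write $S=T\bdot g_1\bdot g_2$ and put $s=\sigma(S)=g_1+g_2$; here $s\ne 0$, because $g_1+g_2=0$ would make $g_1\bdot g_2$ a zero-sum subsequence of length $2\le \mathsf D(G)-2$.

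The heart of the matter is then a statement purely about $T$. Complementing subsequences inside $T$ and inside $S$, it suffices to show that one of the following holds: $g_1\in\Sigma_{\ge 2}(T)$, or $g_2\in\Sigma_{\ge 2}(T)$, or $g_1+g_2\in\Sigma_{\ge 3}(T)$, where $\Sigma_{\ge j}(T)$ is the set of sums of subsequences of $T$ of length at least $j$. Indeed, if $g_1=\sigma(C)$ with $C$ a subsequence of $T$ and $|C|\ge 2$, then $g_1\bdot(T\cdot C^{-1})$ is zero-sum of length $1+r(p-1)-|C|\le \mathsf D(G)-2$; and if $g_1+g_2=\sigma(B)$ with $|B|\ge 3$, then $S\cdot B^{-1}$ is zero-sum of length $r(p-1)+2-|B|\le \mathsf D(G)-2$. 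Before attacking this I would make two normalizations. First, we may assume every value occurs in $S$ with multiplicity at most $p-1$: otherwise some $v$ occurs $p$ times and $v^{[p]}$ is a zero-sum subsequence of length $p\le r(p-1)-1$. Second, $T$ spans $C_p^r$: if $T$ lay in a subgroup $H\cong C_p^{r'}$ with $r'<r$, then $|T|=r(p-1)>(r-1)(p-1)+1\ge \mathsf D(H)$, so $T$ would contain a proper zero-sum subsequence, contradicting minimality.

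The main obstacle is the remaining structural claim: for a minimal zero-sum sequence $T$ of length $\mathsf D(G)-1$ over $C_p^r$ with $3\le r<p$ and all multiplicities $\le p-1$, one cannot have simultaneously $g_1,g_2\notin\Sigma_{\ge 2}(T)$ and $g_1+g_2\notin\Sigma_{\ge 3}(T)$ when $g_1+g_2\ne 0$. I expect this to require a genuine analysis of the subsequence sums of near-maximal minimal zero-sum sequences, and the two hypotheses enter in essential but different ways. The bound $r<p$ is equivalent to $(r-1)p\le r(p-1)-1$, which is precisely the length budget needed to assemble at most $r-1$ short pieces — each of length $\le p$, as produced by projecting modulo a line and extracting small zero-sums in $C_p^{r-1}$ — into an admissible short zero-sum; it also makes the Davenport gap $\mathsf D(C_p^{r})-\mathsf D(C_p^{r-1})=p-1$ large enough to drive such an extraction. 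The bound $r\ge 3$ is what excludes the rank-two exceptional sequences: over $C_p^2$ there are minimal zero-sum sequences such as $e_1^{[a]}\bdot e_2^{[b]}\bdot(e_1+e_2)^{[c]}$ for which a basis vector like $e_1$ genuinely fails to lie in $\Sigma_{\ge 2}$, matching the larger rank-two value $\mathsf D(G)+2$ of the invariant (\cite{WZ}). Thus the final and hardest step is to show that for $r\ge 3$ the extra dimension always supplies the missing $\ge 2$-term or $\ge 3$-term representation, yielding the desired contradiction.
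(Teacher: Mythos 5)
Your lower bound is correct and coincides with the paper's: the same sequence $S_0=e_1^{[p-1]}\bdot\cdots\bdot e_r^{[p-1]}\bdot(e_1+\cdots+e_r)$ is used there, with the same minimality argument. Your upper-bound setup is also sound as far as it goes: invoking $\mathsf{s}_{\le \mathsf D(G)-1}(G)=\mathsf D(G)+1$ from \cite[Lemma 8]{WZ} to force a shortest zero-sum subsequence $T$ of length exactly $\mathsf D(G)-1$, noting $T$ is minimal, the two normalizations, and the verification that any of $g_1\in\Sigma_{\ge 2}(T)$, $g_2\in\Sigma_{\ge 2}(T)$, $g_1+g_2\in\Sigma_{\ge 3}(T)$ yields a forbidden short zero-sum subsequence are all correct. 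But this is where the proof stops, and the stopping point is not a technical detail: by complementing inside $T$ and $S$ one sees that the disjunction of those three conditions is in fact \emph{equivalent} to $S$ having a zero-sum subsequence of length at most $\mathsf D(G)-2$. So your "reduction" is a restatement of the theorem for such configurations, and the claim you label "the main obstacle" and "the final and hardest step" --- that the three conditions cannot all fail --- is precisely the content of the theorem. It is left unproven, supported only by heuristics about where $3\le r<p$ should enter. Worse, completing it along these lines would require structural information about minimal zero-sum sequences of near-maximal length $r(p-1)$ over $C_p^r$ with $r\ge 3$, and such inverse results are not available (the structure of long minimal zero-sum sequences is poorly understood already for $C_p^3$). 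So the proposal has a genuine gap that cannot be closed by routine arguments.

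For contrast, the paper's upper-bound proof avoids minimal zero-sum sequences and \cite{WZ} entirely; it is a self-contained double count with Olson's congruence (Lemma \ref{lemma1}). Assuming $N^i(S)=0$ for $i\le rp-r-1$, applying the congruence to $S$ and summing it over all subsequences $T\mid S$ of length $rp-r+1$ gives two linear relations forcing $N^{rp-r}(S)\equiv r-1$ and $N^{rp-r+1}(S)\equiv r \pmod p$. Since $N^{|S|-1}(S)$ is just the multiplicity of $a:=\sigma(S)$ in $S$, and every multiplicity is at most $p-1$ (else $p$ equal entries give a short zero-sum), the hypothesis $r<p$ pins this multiplicity to exactly $r$. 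Applying the same congruence to the subsequences $S\bdot g_j^{-1}$ then shows that for each remaining entry $g_j$ exactly $p-1$ of the other entries $g_i$ satisfy $g_i+g_j=a$, which partitions the remaining $rp-2r+2$ entries into paired value classes of total size $2p-2$, forcing $(2p-2)\mid(rp-2r+2)$. This is impossible because $rp-2r+2\equiv 2-r\pmod{p-1}$ and $0<r-2<p-1$ when $3\le r<p$. That counting argument is what your outline would need to replace with a substantial (and currently unavailable) structural theorem.
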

	
	More generally, we have the following upper bound and lower bound for $2 \le k \le p-r+1$ and $G=C_{p^n}^r$ (with $p$ prime and $3\leq r<p$).
	\begin{theorem} \label{work3}
		Let $p$ be a prime, $r$ and $n$ be positive integers, $3\leq r<p$, $G=C_{p^n}^r$. Then we have
		$$\mathsf D(G)+\left \lceil \frac{k}{r-1} \right \rceil \le \mathsf{s}_{\le \mathsf D(G)-k}(G) \le \mathsf D(G)+k.$$
		where $2 \le k \le p-r+1$.
	\end{theorem}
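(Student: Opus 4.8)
The plan is to prove the two inequalities separately; the lower bound is an explicit construction, while the upper bound is the substantive part.

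\medskip
\noindent\emph{Lower bound.} I would fix a basis $e_1,\dots,e_r$ of $G=C_{p^n}^r$ (each of order $p^n$), put $v=e_1+\cdots+e_r$ and $m=\lceil k/(r-1)\rceil$, and consider
\[
S=e_1^{[p^n-1]}\bdot\cdots\bdot e_r^{[p^n-1]}\bdot v^{[m]},
\]
of length $r(p^n-1)+m=\mathsf D(G)+m-1$. A nonempty zero-sum subsequence uses $a_i\in[0,p^n-1]$ copies of $e_i$ and $b\in[0,m]$ copies of $v$, and the zero-sum condition reads $a_i+b\equiv 0\pmod{p^n}$ in every coordinate; since $m<p^n$, this forces $b\ge 1$ and $a_i=p^n-b$ for all $i$, so the subsequence has length $rp^n-(r-1)b$. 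This is minimized at $b=m$, and using the elementary inequality $(r-1)\lceil k/(r-1)\rceil\le k+r-2$ one gets
\[
rp^n-(r-1)m\ \ge\ rp^n-(k+r-2)\ =\ \mathsf D(G)-k+1 .
\]
Hence $S$ has no nonempty zero-sum subsequence of length $\le \mathsf D(G)-k$, which yields $\mathsf s_{\le \mathsf D(G)-k}(G)\ge |S|+1=\mathsf D(G)+\lceil k/(r-1)\rceil$.

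\medskip
\noindent\emph{Upper bound.} Here I would argue by contradiction: assume $S$ has length $\mathsf D(G)+k$ but no nonempty zero-sum subsequence of length $\le \mathsf D(G)-k$. Choosing a minimal-length (hence irreducible) zero-sum subsequence $T$, which exists since $|S|\ge \mathsf D(G)$, we have $|T|\ge \mathsf D(G)-k+1$, and $T$ with one element deleted is zero-sum free; therefore a maximal zero-sum-free subsequence $U$ of $S$ satisfies $\mathsf D(G)-k\le |U|\le \mathsf D(G)-1$, and the complement $W=S\bdot U^{-1}$ has $k+1\le |W|\le 2k$. By maximality, for each $w\in W$ there is $U_w\subseteq U$ with $\sigma(U_w)=-w$; since $U_w\bdot w$ is a zero-sum of length $|U_w|+1$, the assumption forces $|U_w|\ge \mathsf D(G)-k$, i.e.\ $U_w$ omits at most $k-1$ elements of $U$. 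The whole problem then reduces to the following cancellation statement: \emph{there exist a nonempty $J\subseteq W$ and a subsequence $U'\subseteq U$ with $\sigma(U')=-\sigma(J)$ and $|U'|+|J|\le \mathsf D(G)-k$}, since $U'\bdot J$ is then the forbidden short zero-sum.

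\medskip
\noindent The main obstacle is precisely this cancellation step, and it is where the hypotheses $3\le r<p$ and $k\le p-r+1$ must enter. The genuine difficulty is that one needs an honest bound on the \emph{length} of the zero-sum, whereas the polynomial (Chevalley--Warning) method over $\mathbb F_p$ controls only the length modulo $p$; so a structural, inverse-type input seems unavoidable. Concretely, I would try to show that under $r<p$ a zero-sum-free subsequence of $C_{p^n}^r$ of length within $k-1$ of the maximum $\mathsf D(G)-1$ is supported essentially on a basis with multiplicities close to $p^n-1$, so that elements of small ``height'' admit short representations inside $U$; combined with $|W|\ge k+1$ and $k\le p-r+1$ this should furnish a nonempty $J\subseteq W$ whose sum cancels cheaply, giving $|U'|+|J|\le \mathsf D(G)-k$. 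Finally, the general exponent $p^n$ would be reduced to the prime case $C_p^r$ by the standard inductive $p$-adic passage from $C_{p^n}^r$ to $C_{p^{n-1}}^r$, extracting short zero-sums modulo $p$ and iterating; establishing the near-extremal structure of $U$ is the step I expect to be hardest.
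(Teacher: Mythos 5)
Your lower bound is correct and is essentially the paper's own construction: the same sequence $e_1^{[p^n-1]}\bdot\cdots\bdot e_r^{[p^n-1]}\bdot(e_1+\cdots+e_r)^{[\lceil k/(r-1)\rceil]}$, with the same analysis of its zero-sum subsequences (your verification of the inequality $(r-1)\lceil k/(r-1)\rceil\le k+r-2$ is in fact more careful than what the paper writes).

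The upper bound, however, has a genuine gap, and you say so yourself: everything is reduced to the ``cancellation statement'' (find nonempty $J\subseteq W$ and $U'\subseteq U$ with $\sigma(U')=-\sigma(J)$ and $|U'|+|J|\le \mathsf D(G)-k$), and the proposed way to prove it is an inverse/structural theorem asserting that a zero-sum-free sequence over $C_{p^n}^r$ of length within $k-1$ of $\mathsf D(G)-1$ is ``essentially supported on a basis with multiplicities close to $p^n-1$.'' No such theorem is available for rank $r\ge 3$; even the structure of zero-sum-free sequences of \emph{maximal} length over $C_p^3$ is not understood, so this input is not a routine lemma but an open-type problem, and the reduction to $C_p^r$ by the $p$-adic induction does not remove this difficulty. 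Moreover, your premise that the polynomial method ``controls only the length modulo $p$,'' so that a structural input is unavoidable, is exactly what the paper's proof refutes. The paper assumes $N^i(S)=0$ for $i\le \mathsf D(G)-k$ and applies Olson's congruence (Lemma \ref{lemma1}) not once but to all subsequences $T\subseteq S$ of co-length $t$, for every $t=0,1,\dots,k$. Averaging over $T$ turns each fixed $t$ into a linear congruence in the $k$ unknowns $N^{j}(S)$, $\mathsf D(G)-k+1\le j\le \mathsf D(G)$, with binomial-coefficient entries. This yields a system $AX+b\equiv 0\pmod p$ whose coefficient matrix $A$ has rank at most $k$, while the augmented matrix $(b,A)$ has determinant $(-1)^{k(k+1)/2}\binom{rp^n-r+1}{k}$ (the Vandermonde-type Lemma \ref{lem6}), which by Lucas' theorem is congruent to $(-1)^{k(k+1)/2}\binom{p-r+1}{k}\not\equiv 0\pmod p$ precisely because $k\le p-r+1$ and $r<p$. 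So the system is inconsistent, a contradiction --- no inverse theorem needed. This is where the hypotheses $3\le r<p$ and $k\le p-r+1$ actually enter, via the $p$-adic digits of $\mathsf D(G)$ rather than via any near-extremal structure; to repair your argument you would either need to prove the structural claim (currently out of reach) or switch to this counting scheme.
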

	
	For groups of the form $C_{p^n}^3$ and $k=p^n$, we obtain the following precise value.
	
	\begin{theorem} \label{work6}
		Let $p$ be an odd prime, $n$ be a positive integer, $G=C_{p^n}^3$. Then we have
		$$\mathsf{s}_{\le \mathsf D(G)-p^n}(G) = \mathsf D(G)+p^n.$$
	\end{theorem}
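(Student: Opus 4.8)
The plan is to set $m=p^{n}$, so that $\exp(G)=m$ and, by the formula for the Davenport constant of a $p$-group, $\mathsf D(G)=3(m-1)+1=3m-2$; the assertion then reads $\mathsf s_{\le 2m-2}(G)=4m-2$, and I would prove the matching upper and lower bounds separately. A guiding observation is that $\mathsf D(G)+m=4m-2=\mathsf s(C_{m}^{2})+1$, where $\mathsf s(C_{m}^{2})=4m-3$ is the Kemnitz--Reiher constant and $\eta(C_{m}^{2})=3m-2$; this makes the coordinate projection $\pi\colon C_{m}^{3}\to C_{m}^{2}$ onto the first two coordinates, with complementary map $\psi\colon C_{m}^{3}\to\langle e_{3}\rangle\cong C_{m}$, the natural engine for the argument.

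For the lower bound I must produce a sequence $S$ over $G$ with $|S|=4m-3=\mathsf D(G)+m-1$ having no zero-sum subsequence of length in $[1,2m-2]$, i.e.\ with minimal zero-sum length $\ge 2m-1$. The first thing to record is that the naive ``one coset'' construction, in which every term lies in a fixed coset of $\ker\psi$ so that all zero-sums are forced to have length divisible by $m$, is capped at length $4m-4$ by $\mathsf s(C_{m}^{2})=4m-3$, and is therefore one term short. The plan is instead to build $S=U\cdot V$, where $U$ is a minimal zero-sum of maximal length $\mathsf D(G)=3m-2$ (whose only zero-sum subsequence is $U$ itself, of length $3m-2\ge 2m-1$) and $V$ is a zero-sum-free sequence of length $m-1$, the two chosen so that every mixed relation $\sigma(U')=-\sigma(V')$ with $U'\subsetneq U$ and $V'\subseteq V$ forces $|U'|+|V'|\ge 2m-1$. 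Verifying this ``no short mixed relation'' property for a well-chosen pair $U,V$ is the heart of the construction, and is where the small-prime boundary cases (notably $p=3$, where every term has multiplicity at most $m-1$ and the extremal configurations are extremely rigid) must be checked directly.

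For the upper bound, suppose $|S|=4m-2$ and, for contradiction, that $S$ has no zero-sum of length $\le 2m-2$. Since $|S|\ge\mathsf s(C_{m}^{2})=4m-3$, the projection $\pi(S)$ has a subsequence $T_{1}\mid S$ with $|T_{1}|=m$ and $\pi(\sigma(T_{1}))=0$, so $\sigma(T_{1})=c_{1}e_{3}$; if $c_{1}=0$ we already have a zero-sum of length $m\le 2m-2$. Removing $T_{1}$ leaves $3m-2=\eta(C_{m}^{2})$ terms, whence a further block $T_{2}$ with $|T_{2}|\le m$ and $\sigma(T_{2})=c_{2}e_{3}$, and so on; the $\psi$-values $c_{i}\in C_{m}$ are then to be combined, using $\mathsf D(C_{m})=m$, into a sub-collection whose $e_{3}$-components cancel, producing a genuine zero-sum of $G$. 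I would carry this out while keeping the total length of the selected blocks at most $2m-2$, and reduce the general case $C_{p^{n}}^{3}$ to $C_{p}^{3}$ by the usual induction on $n$ through the filtration $pG\cong C_{p^{n-1}}^{3}$ with $G/pG\cong C_{p}^{3}$, treating the base case $C_{p}^{3}$ by the polynomial (Chevalley--Warning) method.

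The main obstacle, in both directions, is the tight ``$+2$'' in the length. In the extraction argument the two blocks $T_{1},T_{2}$ each have length only $\le m$, so their union has length $\le 2m$ rather than the required $\le 2m-2$; shaving these last two units forces one to use the exact extremal structure of length-$(3m-2)$ and length-$(4m-3)$ sequences over $C_{m}^{2}$ (inverse Kemnitz information) rather than the bare constants, and equivalently, on the lower-bound side, to locate the single extra term that upgrades the length from $4m-4$ to $4m-3$. I expect this inverse/structural step, together with its verification at $p=3$, to be the crux of the whole argument.
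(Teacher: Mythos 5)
Both halves of your outline stop exactly at the points where the real work lies, so what you have is a plan rather than a proof. On the lower bound, you never exhibit the pair $(U,V)$, and your template --- $U$ a minimal zero-sum sequence of maximal length $3m-2$, $V$ zero-sum-free of length $m-1$ (writing $m=p^n$ as you do), with no mixed relation of total length at most $2m-2$ --- fails for the natural candidates. For instance, with $U=(1,0,0)^{[m-1]}\bdot(0,1,0)^{[m-1]}\bdot(0,0,1)^{[m-1]}\bdot(1,1,1)$ and $V=(1,1,-1)^{[m-1]}$, the subsequence consisting of $j$ copies of $(1,1,-1)$, one copy of $(1,1,1)$, $j-1$ copies of $(0,0,1)$, and $m-j-1$ copies each of $(1,0,0)$ and $(0,1,0)$ is a zero-sum subsequence of length exactly $2m-2$, for every $1\le j\le m-1$. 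The paper's extremal sequence, $(1,0,0)^{[m-1]}\bdot(0,1,0)^{[m-1]}\bdot(0,0,1)^{[m-1]}\bdot(1,1,-1)^{[m-1]}\bdot(1,1,0)$, is \emph{not} of your form: all of its nonempty zero-sum subsequences have length $2m-1$ or $2m$, so in particular it contains no zero-sum subsequence of length $3m-2$ at all. Whether your template is realizable is left completely open in your write-up, and the computation above suggests it is at best delicate.

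On the upper bound, the gap you flag yourself is fatal as stated: the blocks produced by $\mathsf s(C_m^2)=4m-3$ and $\eta(C_m^2)=3m-2$ have length up to $m$, so any two of them already total up to $2m$, and the sub-collection with cancelling $e_3$-components obtained from $\mathsf D(C_m)=m$ may involve up to $m$ blocks, of total length up to $m^2$. Deferring this to ``inverse Kemnitz'' structure theory does not close it: the inverse problems for $\mathsf s(C_m^2)$ and $\eta(C_m^2)$ are themselves only partially solved, so your argument would rest on statements stronger than anything available, and the claimed induction from $C_p^3$ to $C_{p^n}^3$ is never set up (this invariant has no evident multiplicative behaviour under the filtration by $pG$). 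The paper needs none of this: assuming $N^i(S)=0$ for all $i\le 2m-2$ (which also forces $N^i(S)=0$ for $3m-2<i<4m-2$, since any such zero-sum subsequence would split off a shorter one of forbidden length), it applies Lemma \ref{lemma1} once to $S$ itself and once summed over all subsequences $T$ of $S$ of length $3m-2$; Lucas' theorem turns the latter into $3-N^{2m-1}(S)+\dots-N^{3m-2}(S)\equiv 0 \pmod p$, and subtracting the two congruences gives $N^{4m-2}(S)\equiv 2 \pmod p$, which is impossible because $N^{4m-2}(S)\in\{0,1\}$ and $p\ge 3$. That congruence-counting device (rather than projections to rank two) is the missing idea, and it works uniformly in $n$ with no inverse theorems at all.
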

	
	A construction for the lower bound allow us to obtain the following corollary.
	
	\begin{corollary} \label{work5}
		Let $p$ be an odd prime, $n$ be a positive integer, $G=C_{p^n}^3$. Then we have
		$$\mathsf D(G)+p^n-1\leq \mathsf{s}_{\le \mathsf D(G)-p^n+1}(G) \le \mathsf D(G)+p^n.$$
	\end{corollary}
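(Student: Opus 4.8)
The plan is to obtain the upper bound essentially for free from Theorem \ref{work6}, and to prove the lower bound by exhibiting one explicit extremal sequence. For the upper bound, I would first note that $\mathsf{s}_{\le t}(G)$ is non-increasing in $t$, since enlarging the admissible interval $[1,t]$ can only make a qualifying zero-sum subsequence easier to find. As $\mathsf D(G)-p^n+1>\mathsf D(G)-p^n$, this gives
\[ \mathsf{s}_{\le \mathsf D(G)-p^n+1}(G)\le \mathsf{s}_{\le \mathsf D(G)-p^n}(G)=\mathsf D(G)+p^n, \]
where the equality is exactly Theorem \ref{work6}. All the remaining work is in the lower bound.

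Since $G=C_{p^n}^3$ is a $p$-group of rank $3$, we have $\mathsf D(G)=3(p^n-1)+1=3p^n-2$, so the lower bound $\mathsf{s}_{\le \mathsf D(G)-p^n+1}(G)\ge \mathsf D(G)+p^n-1$ amounts to producing a sequence of length $4p^n-4$ with no zero-sum subsequence of length in $[1,2p^n-1]$. Fixing a basis $(e_1,e_2,e_3)$ of $G$, I would take
\[ S=e_1^{[p^n-1]}\bdot e_2^{[p^n-1]}\bdot (e_1+e_3)^{[p^n-1]}\bdot (e_2+e_3)^{[p^n-1]}, \]
of length $4(p^n-1)=4p^n-4$. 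Writing a subsequence via multiplicities $a,b,c,d\in[0,p^n-1]$ of $e_1,e_2,e_1+e_3,e_2+e_3$, its sum is $(a+c)e_1+(b+d)e_2+(c+d)e_3$, so being a nonempty zero-sum forces $a+c\equiv b+d\equiv c+d\equiv 0\pmod{p^n}$. As each of these quantities lies in $[0,2p^n-2]$, each equals $0$ or $p^n$; scanning the eight cases $(a+c,b+d,c+d)\in\{0,p^n\}^3$, the only one giving a nonempty subsequence is $a+c=b+d=c+d=p^n$, which pins $a=d=p^n-c$, $b=c$ with $c\in[1,p^n-1]$ and hence length $a+b+c+d=2p^n$. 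Thus every nonempty zero-sum subsequence of $S$ has length exactly $2p^n>2p^n-1$, which yields the desired lower bound.

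The main obstacle is simply picking the four elements correctly. The naive candidate $e_1,e_2,e_3,e_1+e_2+e_3$ has the right length but admits the short zero-sum $e_1\bdot e_2\bdot e_3\bdot(e_1+e_2+e_3)^{[p^n-1]}$ of length $p^n+2\le 2p^n-1$, so it fails. The point of the crossed choice $e_1+e_3,\,e_2+e_3$ is that both carry the coordinate $e_3$: cancelling $e_3$ forces $c+d=p^n$, and this single constraint propagates through the other two coordinates and rigidly fixes the total length at $2p^n$, leaving no room for a shorter zero-sum. Establishing this rigidity is the only genuine computation, but it is a short finite check rather than a real difficulty.
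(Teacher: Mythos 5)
Your proof is correct and follows essentially the same route as the paper: the upper bound via monotonicity of $\mathsf{s}_{\le t}(G)$ in $t$ combined with Theorem \ref{work6}, and the lower bound via an extremal sequence of four elements each with multiplicity $p^n-1$ satisfying a single balanced relation that forces every nonempty zero-sum subsequence to have length exactly $2p^n$. In fact your sequence $e_1\,,e_2\,,e_1+e_3\,,e_2+e_3$ is the image of the paper's sequence $e_1\,,e_2\,,e_3\,,e_1+e_2-e_3$ under the automorphism $e_1\mapsto e_1$, $e_2\mapsto e_2+e_3$, $e_3\mapsto e_2$, so the two constructions coincide up to a change of basis.
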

	
	For the group $C_3^3$, it is known that $\mathsf{s}_{\le 3}(C^3_3)=17$(\cite{Harborth}), $\mathsf{s}_{\le 4}(C^3_3)=10$ (Theorem \ref{work6}), $\mathsf{s}_{\le 6}(C^3_3)=8$ (\cite[Lemma 8]{WZ}), and $\mathsf{s}_{\le 7}(C^3_3)=\mathsf D(C_3^3)=7$ (\cite{Olson1}). In the following, we provide the precise values of $\mathsf{s}_{\le 5}(C^3_3)$. Note that this result is not covered by Theorem \ref{work2}.
	
	\begin{theorem} \label{work4}
		We have $\mathsf{s}_{\le 5}(C^3_3)=9$.
	\end{theorem}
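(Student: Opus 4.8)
To see $\mathsf{s}_{\le 5}(C_3^3)\ge 9$ it suffices to exhibit a length-$8$ sequence with no zero-sum subsequence of length in $[1,5]$ (one may also simply invoke the construction underlying Corollary \ref{work5}). Fixing a basis $e_1,e_2,e_3$ of $C_3^3$, I would take
\[
S_0=e_1^{[2]}\bdot e_2^{[2]}\bdot (e_1+e_2+e_3)^{[2]}\bdot (2e_3)^{[2]}.
\]
A subsequence using $i,j\in\{0,1,2\}$ copies of $e_1,e_2$ and $\gamma,\delta\in\{0,1,2\}$ copies of $e_1+e_2+e_3$ and $2e_3$ has sum $(i+\gamma,\ j+\gamma,\ \gamma+2\delta)$; this vanishes only when $\gamma\equiv\delta$ and $i\equiv j\equiv-\gamma\pmod 3$, i.e. for $(\gamma,\delta)\in\{(0,0),(1,1),(2,2)\}$, which forces the subsequence to be empty or to have length exactly $6$. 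Hence every nonempty zero-sum subsequence of $S_0$ has length $6$, proving $\mathsf{s}_{\le 5}(C_3^3)\ge 9$.

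\textbf{Upper bound, setup.} For the reverse inequality I argue by contradiction: suppose $S$ has length $9$ and no zero-sum subsequence of length in $[1,5]$. Forbidding lengths $1,2,3$ immediately gives $0\nmid S$, $\mathsf v_g(S)\le 2$ for all $g$, no two antipodal terms, and no three distinct terms summing to $0$; in particular $\mathrm{supp}(S)$ is a cap in $\mathbb F_3^3$, and since a length-$9$ sequence over a plane $C_3^2$ already has a zero-sum subsequence of length $\le\mathsf D(C_3^2)=5$, the support must span $\mathbb F_3^3$. Writing $\mathrm{supp}(S)=\{h_1,\dots,h_m\}$ with multiplicities $\mu_j\in\{1,2\}$ and $\sum_j\mu_j=9$, zero-sum subsequences correspond exactly to the admissible codewords of the ternary relation code $V=\{c\in\mathbb F_3^m:\sum_j c_jh_j=0\}$, namely those $c$ with $0\le c_j\le\mu_j$, the length being $\sum_j c_j$. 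Thus the hypothesis becomes: $V$ has no nonzero admissible codeword of weight $\le 5$.

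\textbf{The engine.} Because $\mathsf s_{\le 6}(C_3^3)=8$, the sequence $S$ has a zero-sum subsequence of length $\le 6$; by hypothesis it has length exactly $6$ and is a \emph{minimal} zero-sum sequence $T$, i.e. an admissible weight-$6$ codeword whose only admissible $\{0,1\}$-sub-codeword is itself. The principal case is $\mathrm{supp}(S)=\mathrm{supp}(T)$ (so $m=6$ and the three remaining terms of $S$ double three terms of $T$). Here $T=g_1\bdot\cdots\bdot g_6$ consists of six distinct elements summing to $0$; I would double the three indices in a set $D$. Minimality forces the three doubled vectors to be independent, since otherwise a nontrivial relation among them is an admissible codeword supported on $D$ of weight $\le 5$ (weight $6$ would mean $g_{i_1}+g_{i_2}+g_{i_3}=0$, an excluded collinear triple), already a short zero-sum. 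Hence restriction to the three undoubled coordinates is an isomorphism $V\xrightarrow{\sim}\mathbb F_3^{3}$; let $\phi$ be its inverse, let $v^{(1)},v^{(2)},v^{(3)}$ be the standard basis on the undoubled coordinates, and let $w_k\in\mathbb F_3^{D}$ be the $D$-part of $\phi(v^{(k)})$. Since $\mathbf 1\in V$, linearity gives $w_1+w_2+w_3=(1,1,1)$. If every $\phi(v^{(k)})$ had length $\ge 6$, each $w_k$ would have coordinate-sum $\ge 5$, hence no zero entry and at least two entries equal to $2$; but then every column of the array $(w_{k,\ell})$ has entries in $\{1,2\}$ and sums to $1\bmod 3$, forcing each column-sum to be $4$, i.e. exactly one $2$ per column and only three $2$'s in all---contradicting the six $2$'s demanded by the rows. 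Therefore some $\phi(v^{(k)})$ is an admissible codeword of weight $\le 5$, that is, a zero-sum subsequence of $S$ of length $\le 5$: the desired contradiction.

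\textbf{Remaining cases and main obstacle.} The other configurations---where some added term lies outside $\mathrm{supp}(T)$, so $\mathrm{supp}(S)\supsetneq\mathrm{supp}(T)$ and $m\in\{7,8,9\}$, and where the minimal zero-sum $T$ itself has one or two repeated terms---are to be treated by the same mechanism: express ``no zero-sum of length $\le 5$'' as the nonexistence of a low-weight admissible codeword in the relevant ternary relation code, and run the mod-$3$ column/weight count against the distinguished weight-$6$ codeword coming from $T$. The main obstacle is precisely organizing this finite but genuinely multi-case analysis---in effect classifying the length-$6$ minimal zero-sum sequences of $C_3^3$ up to $GL_3(\mathbb F_3)$ and checking every admissible completion to length $9$---so that no completion escapes producing a zero-sum of length $\le 5$.
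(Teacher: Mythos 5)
Your lower-bound construction is correct (up to a change of basis it is exactly the sequence used for Corollary \ref{work5}), and the ``engine'' you run in your principal case is valid and genuinely nice: the three doubled terms must form a basis, restriction to the undoubled coordinates gives an isomorphism $V\cong\mathbb F_3^3$, the identity $w_1+w_2+w_3=(1,1,1)$ holds, and the count of entries equal to $2$ (at least two per row, exactly one per column) is a real contradiction. The problem is that this settles only one configuration: the case where the minimal zero-sum subsequence $T$ of length $6$ has six distinct terms \emph{and} $\mathrm{supp}(S)=\mathrm{supp}(T)$. Everything else --- $\mathrm{supp}(S)\supsetneq\mathrm{supp}(T)$ with $m\in\{7,8,9\}$, and $T$ itself containing a doubled term (which also covers $m=5$) --- is deferred with the remark that it is ``to be treated by the same mechanism,'' and you yourself call organizing it ``the main obstacle.'' This is a genuine gap, not a routine verification: nothing in your setup excludes, for instance, the case $m=9$ in which all nine terms of $S$ are distinct (nine-point caps spanning $\mathbb F_3^3$ exist in $AG(3,3)$), and there the mechanism cannot even be started, since it hinges on having exactly three doubled coordinates forming a basis with the three undoubled coordinates carrying the isomorphism. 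For $m=7$ or $8$ the restriction of $V$ to the undoubled coordinates is injective but no longer surjective, so the standard-basis vectors $v^{(k)}$ need not lift to codewords at all, and the weight count has no analogue as stated; the repeated-$T$ cases change the admissibility constraints yet again. In short, most of the work of the upper bound remains to be done.

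For comparison, the paper's proof of the upper bound avoids any case distinction on the support. Assuming $N^i(S)=0$ for $i\le 5$, it applies Olson's congruence (Lemma \ref{lemma1}) to all subsequences of lengths $8$ and $7$: first this forces $N^{7}(S)=0$ (a zero-sum $7$-subsequence of an $8$-term $T$ would force the element $\sigma(T)$ to occur three times in $S$, giving a length-$3$ zero-sum), and then, for every pair $g_i\bdot g_j$ removed, it forces at least two indices $k$ with $g_i+g_j+g_k=\sigma(S)$; iterating this pairing structure produces equalities among the $g_i$ that collapse to a contradiction. If you want to complete your route, you must either carry out the full classification of length-$6$ minimal zero-sum sequences and all admissible completions (the multi-case analysis you postponed), or replace the relation-code argument by a support-free counting argument of the paper's type.
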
	
	
	
	The following sections are organized as follows. In Section 2, we shall introduce some notation and auxiliary results. In Section 3, we will prove our main results.		
	\bigskip
	
	\section{Preliminaries}
	In this section, we provide more rigorous definitions and notation. We also introduce some auxiliary results that will be
	used repeatedly below.
	
	Let $\mathbb{N}$ denote the set of positive integers and $\mathbb{N}_0=\mathbb{N}\cup\{0\}$. Let $G$ be a finite abelian group. By the structure theorem of finite abelian groups, we have
	$$G\cong C_{n_1}\oplus\cdots\oplus C_{n_r}$$
	where $r$ is the rank of $G$, $n_1,\dots,n_r$ are integers with $1<n_1|\dots|n_r$. Moreover, $n_1,\dots,n_r$ are uniquely determined by $G$, and $n_r=\exp(G)$ is the exponent of $G$. 
	For convenience, we write $(i_1,\dots,i_r)$ to denote $i_1e_1+\cdots+i_re_r$, where $i_j\in\mathbb{Z}$ and $e_j$ is a generator of $C_j$.
	
	We define a sequence over $G$ to be an element of the free abelian monoid $(\mathcal F(G),\bdot)$; see Chapter 5 of \cite{GH} for detailed
	explanation. Let
	$$g^{[i]}=\underbrace{g\bdot \dots \bdot g}_i \in \mathcal F(G)\text{ and }T^{[i]}=\underbrace{T\bdot \dots \bdot T}_i\in \mathcal F(G)$$
	for $g\in G$, $T\in\mathcal F(G)$, and $i\in \mathbb N_0$.
	
	Let $$S=g_1\bdot\dots\bdot g_\ell$$ 
	be a sequence over $G$. We call
	\begin{itemize}
		\item $|S|=\ell$ the length of $S$;
		\item $ \sigma(S)=\sum_{i=1}^\ell g_i\in G$ the sum of $S$;
		\item $S$  a zero-sum sequence if $ \sigma(S)=0$;
		\item $S$  a minimal zero-sum sequence if $S$ contains no zero-sum subsequence $T$ with $1\le |T|<|S|$.
	\end{itemize}

	Define 
	$$N^k(S)=|\{I\subset[1,\ell]|\sum_{i\in I}g_i=0,|I|=k\}|$$
	to be the number of zero-sum subsequences $T$ of $S$ with $|T|=k$. 
	
	Let 
	$$\mathsf D^*(G)=1+\sum_{i=1}^{r}(n_i-1).$$
	
	\begin{lemma} \label{lemma1}{\rm(\cite{Olson1})}
		Let $G$ be a finite abelian $p$-group. Then
		$$\mathsf D(G)=\mathsf D^*(G).$$ Moreover, if $S$ is a sequence over $G$ with $|S|=\ell\geq \mathsf D^*(G)$, then
		$$1-N^1(S)+N^2(S)+\cdots+(-1)^\ell N^\ell(S)\equiv0\pmod{p}.$$
	\end{lemma}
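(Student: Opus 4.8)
The plan is to pass to the group algebra $R=\mathbb F_p[G]$, written multiplicatively with $\mathbb F_p$-basis $\{X^g:g\in G\}$ and multiplication $X^gX^h=X^{g+h}$, and to exploit the nilpotency of its augmentation ideal. Write $G\cong C_{n_1}\oplus\cdots\oplus C_{n_r}$ with each $n_i$ a power of $p$, let $e_i$ generate the $i$-th factor, and set $x_i=X^{e_i}-1$. Since $e_i$ has order $n_i$ we have $(1+x_i)^{n_i}=X^{n_ie_i}=X^0=1$; as $n_i$ is a power of $p$, the Frobenius identity in characteristic $p$ gives $(1+x_i)^{n_i}=1+x_i^{n_i}$, so $x_i^{n_i}=0$. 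The augmentation ideal $I=\ker\varepsilon$, where $\varepsilon(X^g)=1$, is generated by the $x_i$, since every $X^g-1$ expands into a polynomial in $x_1,\dots,x_r$ with zero constant term via $X^{g+h}-1=(X^g-1)(X^h-1)+(X^g-1)+(X^h-1)$.

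I would then show $I^{\mathsf D^*(G)}=0$. As the $x_i$ commute, $I^m$ is spanned by monomials $x_1^{d_1}\cdots x_r^{d_r}$ with $d_1+\cdots+d_r=m$. When $m\ge\mathsf D^*(G)=1+\sum_{i=1}^r(n_i-1)$, the inequality $\sum_i d_i>\sum_i(n_i-1)$ forces $d_i\ge n_i$ for some $i$ by pigeonhole, and then $x_i^{d_i}=0$ kills the monomial. Hence every generator of $I^{\mathsf D^*(G)}$ vanishes, so $I^{\mathsf D^*(G)}=0$.

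The congruence now follows by coefficient extraction. For a sequence $S=g_1\bdot\dots\bdot g_\ell$ with $\ell\ge\mathsf D^*(G)$, form $P=\prod_{j=1}^\ell(X^{g_j}-1)\in I^\ell\subseteq I^{\mathsf D^*(G)}=0$. Expanding the product over subsets gives $P=\sum_{J\subseteq[1,\ell]}(-1)^{\ell-\abs{J}}X^{\sigma(S_J)}$, where $S_J=\prod_{j\in J}g_j$. The coefficient of the identity $X^0$ equals $\sum_{J:\sigma(S_J)=0}(-1)^{\ell-\abs{J}}=(-1)^\ell\sum_{k=0}^\ell(-1)^kN^k(S)$, using $N^0(S)=1$. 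Since $P=0$ this coefficient is $0$ in $\mathbb F_p$, and as $(-1)^\ell$ is a unit we obtain $1-N^1(S)+N^2(S)-\cdots+(-1)^\ell N^\ell(S)\equiv0\pmod p$.

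Finally, for $\mathsf D(G)=\mathsf D^*(G)$: the bound $\mathsf D(G)\ge\mathsf D^*(G)$ holds for any finite abelian group, witnessed by $e_1^{[n_1-1]}\bdot\dots\bdot e_r^{[n_r-1]}$, a sequence of length $\mathsf D^*(G)-1$ whose only zero-sum subsequence is empty (a subsum $\sum_i c_ie_i$ with $0\le c_i\le n_i-1$ vanishes only when every $c_i=0$). For the reverse inequality I invoke the congruence: were there a sequence $S$ of length $\ell\ge\mathsf D^*(G)$ with no nonempty zero-sum subsequence, then $N^k(S)=0$ for all $k\ge1$ and the alternating sum would collapse to $N^0(S)=1$, contradicting $1\equiv0\pmod p$. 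Thus every such $S$ has a nonempty zero-sum subsequence and $\mathsf D(G)\le\mathsf D^*(G)$. The main obstacle is the nilpotency computation---namely establishing $x_i^{n_i}=0$ and $I^{\mathsf D^*(G)}=0$, which encodes why the result is special to $p$-groups; once that is in place the congruence and both Davenport bounds are routine, with the clean feature that the upper bound drops straight out of the congruence.
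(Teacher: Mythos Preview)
Your argument is correct and is essentially Olson's original group-algebra proof, which the paper simply cites rather than reproving: the nilpotency $I^{\mathsf D^*(G)}=0$ in $\mathbb F_p[G]$ via $x_i^{n_i}=0$, followed by reading off the coefficient of $X^0$ in $\prod_j(X^{g_j}-1)=0$, is exactly the classical route. One minor wording issue: $I^m$ is spanned by monomials with $\sum_i d_i\ge m$ rather than $=m$, but your pigeonhole step goes through verbatim since $\sum_i d_i\ge m\ge\mathsf D^*(G)>\sum_i(n_i-1)$ still forces some $d_i\ge n_i$.
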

	\begin{corollary}\label{cor1}{\rm(\cite{GHZ})}
		Let $G$ be a finite abelian $p$-group. If $S$ is a sequence over $G$ with $|S|=\ell\geq \mathsf D^*(G)+p^n-1$, then
		$$1-N^{p^n}(S)+N^{2\cdot p^n}(S)+\cdots+(-1)^{\left \lfloor  \frac{\ell}{p^n}\right \rfloor}N^{\left \lfloor  \frac{\ell}{p^n}\right \rfloor \cdot p^n}(S)\equiv0\pmod{p}.$$
	\end{corollary}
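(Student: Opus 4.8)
The plan is to derive the congruence directly from Lemma~\ref{lemma1} by a coordinate-extension trick that isolates exactly those zero-sum subsequences of $S$ whose length is divisible by $p^n$. First I would pass to the larger group $G'=G\oplus C_{p^n}$, fix a generator $f$ of the new cyclic factor $C_{p^n}$, and attach it to every term of $S$: writing $S=g_1\bdot\ldots\bdot g_\ell$, set $S'=(g_1+f)\bdot\ldots\bdot(g_\ell+f)\in\mathcal F(G')$, so that $|S'|=\ell$. The point of the construction is that for an index set $I\subseteq[1,\ell]$ one has $\sum_{i\in I}(g_i+f)=\bigl(\sum_{i\in I}g_i\bigr)+|I|f$, which vanishes in $G'$ if and only if $\sum_{i\in I}g_i=0$ in $G$ and $|I|f=0$ in $C_{p^n}$, i.e. $p^n\mid|I|$. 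Consequently $N^k(S')=0$ whenever $p^n\nmid k$, while $N^{jp^n}(S')=N^{jp^n}(S)$ for every $j$, since the condition $|I|f=0$ is automatic once $p^n\mid|I|$.

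Next I would check that the length hypothesis is exactly adapted to $G'$. As $G$ is a $p$-group, its elementary divisors are prime powers and $\mathsf D^*(G)-1$ is the sum of $(p^{a}-1)$ over those divisors; adjoining the cyclic factor $C_{p^n}$ simply adds one more elementary divisor $p^n$, so $\mathsf D^*(G')=\mathsf D^*(G)+(p^n-1)$. Hence the assumption $\ell\ge \mathsf D^*(G)+p^n-1$ is precisely $\ell\ge \mathsf D^*(G')$, and since $G'$ is again a finite abelian $p$-group I may apply Lemma~\ref{lemma1} to $S'$ to obtain
$$\sum_{k=0}^{\ell}(-1)^kN^k(S')\equiv 0\pmod p.$$
Substituting the vanishing of the terms with $p^n\nmid k$ together with $N^{jp^n}(S')=N^{jp^n}(S)$, the left-hand side collapses to $\sum_{j=0}^{\lfloor \ell/p^n\rfloor}(-1)^{jp^n}N^{jp^n}(S)$, where the empty subsequence supplies the leading $1=N^0(S')$.

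It then remains to reconcile the signs $(-1)^{jp^n}$ appearing here with the signs $(-1)^{j}$ in the asserted congruence. When $p$ is odd, $p^n$ is odd and $(-1)^{jp^n}=(-1)^{j}$ as integers, so the two expressions are literally equal. When $p=2$, every sign is $\equiv 1\pmod 2$, whence $(-1)^{jp^n}N^{jp^n}(S)\equiv(-1)^{j}N^{jp^n}(S)\pmod 2$ termwise, and summing yields the same congruence modulo $2$. In either case $\sum_{j}(-1)^{j}N^{jp^n}(S)\equiv 0\pmod p$, which is the claim.

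I expect the only genuinely delicate points to be the bookkeeping for $\mathsf D^*(G')$, where one must use that for a $p$-group the quantity $\mathsf D^*(G)-1$ is additive over the cyclic $p$-group factors of any primary decomposition, and the sign reconciliation when $p=2$, where the integer identity $(-1)^{jp^n}=(-1)^j$ fails but the congruence nonetheless survives because $-1\equiv 1\pmod 2$.
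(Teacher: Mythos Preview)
Your argument is correct. The paper itself does not supply a proof of this corollary but simply cites \cite{GHZ}; your coordinate-extension trick---embedding $G$ into $G'=G\oplus C_{p^n}$, shifting every term by a generator $f$ of the new factor so that zero-sum subsequences of $S'$ correspond precisely to zero-sum subsequences of $S$ whose length is divisible by $p^n$, and then invoking Lemma~\ref{lemma1} for $G'$---is exactly the argument used in \cite{GHZ} (and is in the same spirit as Reiher's proof of Kemnitz' conjecture \cite{[Rei]}). The two points you flagged as delicate are handled correctly: for a $p$-group the elementary-divisor decomposition makes $\mathsf D^*(G')=\mathsf D^*(G)+p^n-1$ immediate, and the sign discussion for $p=2$ is fine since everything is read modulo~$2$.
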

	\begin{lemma}\label{lem4}{\rm(\cite{Gao1})}
		Let $G$ be a finite abelian group, then
		$$\mathsf{s}_{k\exp G}(G)\geq k\exp G+\mathsf D(G)-1$$
		holds for every $k\geq 1$.
	\end{lemma}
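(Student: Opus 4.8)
The plan is to establish the lower bound by exhibiting a single sequence of length $k\exp(G)+\mathsf D(G)-2$ that has no zero-sum subsequence of length exactly $k\exp(G)$. Since $\mathsf{s}_{k\exp(G)}(G)$ is by definition the \emph{smallest} length forcing such a subsequence, and since this threshold is monotone (deleting an element from a longer sequence shows that if every length-$\ell$ sequence has the required subsequence then so does every longer one), the existence of one sequence of length $k\exp(G)+\mathsf D(G)-2$ without it immediately yields $\mathsf{s}_{k\exp(G)}(G)\ge k\exp(G)+\mathsf D(G)-1$.

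Write $n=\exp(G)$. First I would invoke the definition of the Davenport constant: since $\mathsf D(G)=\mathsf{s}_{\mathbb N}(G)$ is the smallest $\ell$ forcing a nonempty zero-sum subsequence, there exists a zero-sum-free sequence $T$ over $G$ (that is, one with $\sigma(T')\neq 0$ for every nonempty $T'\mid T$) of length $|T|=\mathsf D(G)-1$. Let $0$ denote the identity of $G$ and set
$$S=0^{[kn-1]}\bdot T,$$
so that $|S|=(kn-1)+(\mathsf D(G)-1)=k\exp(G)+\mathsf D(G)-2$, as desired.

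The key step is to verify that $S$ has no zero-sum subsequence of length exactly $kn$. Any subsequence $W\mid S$ of length $kn$ can be written as $W=0^{[a]}\bdot T'$ with $T'\mid T$ and $a+|T'|=kn$, where $0\le a\le kn-1$. Because $a\le kn-1<kn$, we have $|T'|=kn-a\ge 1$, so $T'$ is nonempty; since $T$ is zero-sum-free this forces $\sigma(W)=\sigma(T')\neq 0$. Hence no length-$kn$ subsequence of $S$ is zero-sum, which completes the argument.

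I do not expect a genuine obstacle here, as the statement is elementary; the only real point is the choice of padding. It is tempting to pad with $kn-1$ copies of an element $g$ of order $n$, but that can fail (for instance, in $C_2$ the sequence $1^{[2k]}$ does contain a zero-sum subsequence of length $2k$), precisely because such a $g$ may interact with $T$. Padding instead with the identity $0$ is what makes the verification immediate, since the zeros contribute nothing to any partial sum and cannot by themselves reach length $kn$. As a sanity check, for $G=C_2^3$ one has $\mathsf D(G)=4$ and the bound reads $\mathsf{s}_{2k}(C_2^3)\ge 2k+3$, matching the exact value recorded in the introduction.
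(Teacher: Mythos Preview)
Your argument is correct and is exactly the standard construction: pad a zero-sum-free sequence of length $\mathsf D(G)-1$ with $k\exp(G)-1$ copies of $0$, and observe that any length-$k\exp(G)$ subsequence must use a nonempty piece of the zero-sum-free part. The paper does not actually supply its own proof of this lemma; it merely quotes the result from \cite{Gao1}, so there is nothing to compare against beyond noting that your proof is the classical one.
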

	\begin{lemma}\label{lem1}
		We have $\mathsf s(C_3^3)=19$ and $\mathsf s_{2\cdot 3}(C_3^3)=13$.
	\end{lemma}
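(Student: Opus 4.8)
The plan is to treat the two equalities separately, since both are essentially already recorded in the literature. The second equality $\mathsf s_{2\cdot 3}(C_3^3)=13$ is precisely the computation of Gao and Thangadurai quoted in the introduction \cite{GaoRT}, so for that part I would simply invoke their result. The work is therefore concentrated in the first equality $\mathsf s(C_3^3)=\mathsf s_{3}(C_3^3)=19$, which I would establish by separating the lower and upper bounds.

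For the lower bound I would exhibit a sequence over $C_3^3\cong\mathbb{F}_3^3$ of length $18$ containing no zero-sum subsequence of length $3$. The key structural observation is that a length-$3$ zero-sum subsequence $g_i\bdot g_j\bdot g_k$ is of one of only two types: either $g_i=g_j=g_k$ (since $3g=0$ for every $g\in\mathbb{F}_3^3$), or $g_i,g_j,g_k$ are three distinct elements with $g_i+g_j+g_k=0$, i.e.\ a collinear triple in the affine space $\mathbb{F}_3^3$. The mixed case $2g+h=0$ with $h\ne g$ cannot occur, because $2=-1$ in $\mathbb{F}_3$ forces $h=g$. Hence a sequence avoids length-$3$ zero-sums precisely when its support is a cap (a set with no three collinear points) and every element occurs with multiplicity at most $2$. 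Taking a maximal cap in $\mathbb{F}_3^3$, which has size $9$, and doubling each of its elements yields the desired sequence of length $2\cdot 9=18$, giving $\mathsf s(C_3^3)\ge 19$.

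For the matching upper bound $\mathsf s(C_3^3)\le 19$ --- the assertion that every sequence of length $19$ over $\mathbb{F}_3^3$ contains a zero-sum subsequence of length $3$ --- I would rely on the known determination of this value (see, e.g., \cite{Harborth}, or the tabulation in the survey \cite{[GaoG]}) rather than reprove it; alternatively one can note that it is consistent with the relation $\mathsf s(C_3^3)=\eta(C_3^3)+\exp(C_3^3)-1=17+3-1$, using the value $\eta(C_3^3)=\mathsf s_{\le 3}(C_3^3)=17$ recorded later in the paper. I expect this upper bound to be the genuine obstacle: unlike the lower bound it is not a short extremal construction but a case analysis tied to the geometry of caps in $\mathbb{F}_3^3$, and it is the step for which a fully self-contained argument would be lengthy. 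Combining the two bounds yields $\mathsf s(C_3^3)=19$, completing the proof.
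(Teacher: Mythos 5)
Your proposal is correct and in substance the same as the paper's proof, which is a one-line citation: \cite{Harborth} for $\mathsf s(C_3^3)=19$ and \cite[Theorem 1.1]{GaoRT} for $\mathsf s_{2\cdot 3}(C_3^3)=13$. Your one addition, the explicit lower-bound construction doubling a maximal $9$-point cap in $\mathbb{F}_3^3$ (using that a length-$3$ zero-sum is either a repeated element taken three times or a collinear triple), is correct but the hard direction --- the upper bound $\mathsf s(C_3^3)\le 19$ --- is deferred to Harborth in both your argument and the paper's, so the two proofs rest on the same external results.
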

	\begin{proof}
		See \cite{Harborth} and \cite[Theorem 1.1]{GaoRT} .
	\end{proof}
	\begin{lemma}\label{lem5}
		We have $\mathsf{s}_{2\cdot 3^n}(C_{3^n}^3)\leq 7\cdot 3^n-8$.
	\end{lemma}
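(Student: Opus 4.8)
The plan is to prove the bound by induction on $n$, writing $f(n):=\mathsf{s}_{2\cdot 3^n}(C_{3^n}^3)$ for brevity. The base case $n=1$ is immediate: Lemma~\ref{lem1} gives $f(1)=\mathsf{s}_{2\cdot 3}(C_3^3)=13=7\cdot 3-8$, so the asserted inequality holds with equality. For the inductive step I would assume $f(n-1)\le 7\cdot 3^{n-1}-8$ and deduce the bound for $n$.

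The key device is the quotient map $\pi\colon G=C_{3^n}^3\to \overline G:=G/3G\cong C_3^3$, whose kernel $3G$ is isomorphic to $C_{3^{n-1}}^3$ and has exponent $3^{n-1}$. Given any sequence $S$ over $G$ with $|S|=7\cdot 3^n-8$, I would proceed in two stages. First, I would repeatedly extract from $S$ pairwise disjoint subsequences $B_1,B_2,\dots$, each of length exactly $3$ and with $\sigma(\pi(B_i))=0$ in $\overline G$. Such a block exists inside any remaining part of length at least $\mathsf{s}(C_3^3)=19$ (Lemma~\ref{lem1}), since $\mathsf{s}(C_3^3)$ guarantees, in $\overline G$, a zero-sum subsequence of length $\exp(\overline G)=3$, which I lift back to three elements of $S$. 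As each extracted block removes exactly $3$ elements, one can carry out this extraction $N$ times provided $|S|\ge 19+3(N-1)$, and each block satisfies $\sigma(B_i)\in 3G\cong C_{3^{n-1}}^3$. Second, I would regard the block-sums $\sigma(B_1),\dots,\sigma(B_N)$ as a sequence over $C_{3^{n-1}}^3$ and invoke the definition of $f(n-1)=\mathsf{s}_{2\cdot 3^{n-1}}(C_{3^{n-1}}^3)$: if $N\ge f(n-1)$, there is an index set $I$ with $|I|=2\cdot 3^{n-1}$ and $\sum_{i\in I}\sigma(B_i)=0$. The subsequence $T$ obtained by concatenating the blocks $B_i$ with $i\in I$ is then zero-sum in $G$ of length $3\cdot 2\cdot 3^{n-1}=2\cdot 3^n$, which is exactly what is wanted.

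It remains to check that the arithmetic closes the induction. Taking $N=f(n-1)$, the extraction requires $|S|\ge 19+3(f(n-1)-1)=3f(n-1)+16$. By the inductive hypothesis $f(n-1)\le 7\cdot 3^{n-1}-8$, whence $3f(n-1)+16\le 7\cdot 3^n-24+16=7\cdot 3^n-8=|S|$, so the required $N=f(n-1)$ blocks can indeed be extracted. This yields the recursion $f(n)\le 3f(n-1)+16$, which together with $f(1)=13$ gives $f(n)\le 7\cdot 3^n-8$ for all $n$. I do not expect a serious obstacle here, since the two numerical ingredients $\mathsf{s}(C_3^3)=19$ and $\mathsf{s}_{2\cdot 3}(C_3^3)=13$ are both supplied by Lemma~\ref{lem1}. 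The one point that genuinely needs care is the length bookkeeping: the combined subsequence must have length \emph{exactly} $2\cdot 3^n$ rather than merely a multiple of $3$, and this is precisely why I take the first-stage blocks of length exactly $\exp(\overline G)=3$ and the second-stage selection of length exactly $2\cdot 3^{n-1}=2\exp(C_{3^{n-1}}^3)$.
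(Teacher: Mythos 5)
Your proposal is correct and follows essentially the same route as the paper: induction on $n$, projection $\pi$ onto $C_3^3$ with kernel $3G\cong C_{3^{n-1}}^3$, greedy extraction of length-$3$ blocks using $\mathsf{s}(C_3^3)=19$, and then the induction hypothesis applied to the block sums to select $2\cdot 3^{n-1}$ blocks. If anything, your length bookkeeping (checking $|S|\ge 19+3(N-1)$ with $N=f(n-1)$) is spelled out more explicitly than in the paper's own argument.
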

	\begin{proof}
		We prove by induction on $n$. If $n=1$, the result is supported by Lemma \ref{lem1}. Now, we suppose $n\geq 2$ and the result holds for $n-1$, i.e., $\mathsf{s}_{2\cdot 3^{n-1}}(C_{3^{n-1}}^3)\leq 7\cdot 3^{n-1}-8$. Let $S$ be a sequence over $C_{3^n}^3$ of length $7\cdot 3^n-8$.
		Define a group homomorphism:
		\begin{align*}
			\pi:C_{3^n}^3&\longrightarrow C_3^3\\
			g&\mapsto 3^{n-1}g.
		\end{align*}
		Then, $\ker\pi\cong C_{3^{n-1}}^3$. We want to show that $S$ contains a zero-sum subsequence of length $2\cdot 3^n$. \\
		Since $T=\pi(S)$ is a sequence over $C_3^3$ of length $7\cdot 3^n-8$ and $\mathsf s(C_3^3)=19$, by induction, we can find $t=7\cdot 3^{n-1}-8$ zero-sum subsequences $\pi(S_1),\dots ,\pi(S_t)$ of $T$ with $|S_i|=3$. As $K=\sigma(S_1)\bdot \dots \bdot \sigma(S_t)$ is a sequence over $\ker\pi$, we can find a zero-sum subsequence  $\sigma(S_{i_1})\bdot \dots \bdot \sigma(S_{i_u})$ of $K$ with $u=2\cdot 3^{n-1}$. Thereby, we get a zero-sum subsequence $S'=S_{i_1}\bdot \dots \bdot S_{i_u}$ of $S$ with $|S'|=2\cdot 3^n$. This completes the proof.
	\end{proof}
	\begin{lemma} \label{lemma3}{\rm(Lucas' Theorem)}{\rm(\cite{[Luc]})}
		Let $a,b$ be positive integers with $a=a_np^n+\cdots+a_1p+a_0$ and $b=b_np^n+\cdots+b_1p+b_0$ be the $p$-adic expansions, where $p$ is a prime.  Then
		$$\binom{a}{b}\equiv\binom{a_n}{b_n}\binom{a_{n-1}}{b_{n-1}}\cdots\binom{a_0}{b_0}\pmod{p}.$$
	\end{lemma}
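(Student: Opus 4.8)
The plan is to prove Lucas' Theorem by a generating-function argument in the polynomial ring $\mathbb{F}_p[x]$, comparing the coefficient of $x^b$ in two different expansions of $(1+x)^a$ reduced modulo $p$. The entire content reduces to the Frobenius identity $(1+x)^p\equiv 1+x^p\pmod{p}$ together with the uniqueness of base-$p$ representations.

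First I would establish the fundamental congruence $(1+x)^p\equiv 1+x^p\pmod{p}$. This holds because for each $0<j<p$ the coefficient $\binom{p}{j}=\tfrac{p!}{j!(p-j)!}$ is divisible by $p$: the numerator carries a factor of $p$ that cannot be cancelled by $j!(p-j)!$, neither of whose factors reaches $p$. Iterating this identity (equivalently, applying the Frobenius endomorphism $i$ times) gives $(1+x)^{p^i}\equiv 1+x^{p^i}\pmod{p}$ for every $i\ge 0$. Then, writing $a=\sum_{i=0}^n a_i p^i$, I would factor
$$(1+x)^a=\prod_{i=0}^n\left((1+x)^{p^i}\right)^{a_i}\equiv\prod_{i=0}^n\left(1+x^{p^i}\right)^{a_i}\pmod{p},$$
and expand each factor by the ordinary binomial theorem as $\left(1+x^{p^i}\right)^{a_i}=\sum_{j=0}^{a_i}\binom{a_i}{j}x^{jp^i}$.

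The crux is then to extract the coefficient of $x^b$ from each side of this congruence. On the left it is exactly $\binom{a}{b}$. On the right, a monomial $x^b$ arises precisely by selecting an exponent $j_i$ with $0\le j_i\le a_i$ from the $i$-th factor so that $\sum_{i=0}^n j_i p^i=b$, contributing $\prod_{i=0}^n\binom{a_i}{j_i}$. The key observation is that each $j_i$ satisfies $0\le j_i\le a_i\le p-1$, so the equation $\sum_{i=0}^n j_i p^i=b$ is nothing other than a base-$p$ expansion of $b$; by uniqueness of such expansions we must have $j_i=b_i$ for all $i$. Hence exactly one tuple $(j_0,\dots,j_n)$ contributes, giving coefficient $\prod_{i=0}^n\binom{a_i}{b_i}$, with the convention $\binom{a_i}{b_i}=0$ when $b_i>a_i$ (which faithfully records that no admissible selection exists in that digit position). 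Equating the two coefficients yields $\binom{a}{b}\equiv\prod_{i=0}^n\binom{a_i}{b_i}\pmod{p}$, as claimed.

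I expect the only real (and still modest) obstacle to be the uniqueness step in the coefficient comparison: one must argue carefully that the constraint $0\le j_i\le a_i<p$ forces the chosen exponents $j_i$ to coincide with the base-$p$ digits $b_i$, so that no cross-terms mixing different digit positions occur. The remaining ingredients—divisibility of $\binom{p}{j}$ for $0<j<p$ and the elementary binomial theorem—are entirely routine.
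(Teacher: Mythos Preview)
Your proof is correct; this is the standard generating-function argument for Lucas' Theorem via the Frobenius identity in $\mathbb{F}_p[x]$. The paper itself does not supply a proof of this lemma at all: it merely states the result and cites Lucas' original 1878 paper, treating it as a classical fact. So there is no ``paper's own proof'' to compare against, and your argument stands on its own as a complete and standard justification.
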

	Similar to \cite{GHZ}, we have the following result.
	\begin{lemma}\label{lem6}
		Let $a$ and $k$ be positive integers. Let
		\begin{equation*}
			A = \begin{pmatrix}
				1&1&\dots  &1\\
				\binom{a+k}{1}&\binom{2k-1}{1}&\dots  &\binom{k}{1}\\
				\binom{a+k}{2}&\binom{2k-1}{2}&\dots  &\binom{k}{2}\\
				\vdots&\vdots&\ddots&\vdots \\
				\binom{a+k}{k}&\binom{2k-1}{k}&\dots  &\binom{k}{k}
			\end{pmatrix}_{(k+1)\times(k+1)}.
		\end{equation*}
		Then, we have
		\begin{align*}
			\det (A)=(-1)^{\frac{k(k+1)}{2}}\binom{a}{k}.
		\end{align*}
	\end{lemma}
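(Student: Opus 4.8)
The plan is to read the entries of $A$ as binomial coefficients evaluated at a fixed list of nodes and thereby reduce the computation to a Vandermonde determinant. Indexing rows by $i=0,1,\dots,k$ and columns by $j=1,\dots,k+1$, the $(i,j)$ entry of $A$ is $\binom{x_j}{i}$, where $x_1=a+k$ and $x_j=2k-j+1$ for $2\le j\le k+1$. Thus the relevant node list is the single value $a+k$ together with the block of consecutive integers $k,k+1,\dots,2k-1$ (the latter listed in decreasing order). This viewpoint is what makes the factor $\binom{a}{k}$ visible as a difference of nodes.

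The key step is the factorization. Since $\binom{x}{i}=\frac{x(x-1)\cdots(x-i+1)}{i!}$ is a polynomial in $x$ of degree exactly $i$ with leading coefficient $1/i!$, writing $\binom{x}{i}=\sum_{m=0}^{i}c_{im}x^{m}$ yields $A=CV$, where $C=(c_{im})$ is the $(k+1)\times(k+1)$ lower-triangular change-of-basis matrix with diagonal entries $c_{ii}=1/i!$, and $V=(x_j^{\,i})$ is the Vandermonde matrix in the nodes $x_1,\dots,x_{k+1}$. Consequently
$$\det(A)=\det(C)\det(V)=\Big(\prod_{i=0}^{k}\tfrac{1}{i!}\Big)\prod_{1\le p<q\le k+1}(x_q-x_p).$$

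It then remains to evaluate the Vandermonde product for this node list and match it against $\prod_{i=0}^{k}i!$. I would split the pairs $(p,q)$ into those with $p=1$ and those with $2\le p<q\le k+1$. The pairs with $p=1$ contribute $\prod_{q=2}^{k+1}\big(x_q-(a+k)\big)=\prod_{m=0}^{k-1}(m-a)=(-1)^{k}\,k!\binom{a}{k}$, which is exactly where $\binom{a}{k}$ enters. The remaining pairs involve only the consecutive block $k,\dots,2k-1$; since $x_q-x_p=p-q$ there, their product equals $(-1)^{\binom{k}{2}}\prod_{i=1}^{k-1}i!$, a signed superfactorial. Using $0!=1$ one has $\prod_{i=0}^{k}i!=k!\prod_{i=1}^{k-1}i!$, so $\prod_{i=1}^{k-1}i!$ cancels and only a factor $k!$ survives, which cancels the $k!$ from the $p=1$ contribution. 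Collecting the two signs gives the exponent $k+\binom{k}{2}=\tfrac{k(k+1)}{2}$, hence $\det(A)=(-1)^{k(k+1)/2}\binom{a}{k}$, as claimed.

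The conceptual content lies entirely in the factorization $A=CV$; the only real obstacle is the sign-and-factorial bookkeeping in the Vandermonde product, which is elementary but easy to get wrong. The delicate point is that the block $k,\dots,2k-1$ is listed in \emph{decreasing} order of node value, so each difference $x_q-x_p$ with $p<q$ is negative and feeds the sign $(-1)^{\binom{k}{2}}$. As a sanity check I would verify $k=1$ (giving $-a$) and $k=2$ (giving $-\binom{a}{2}$) directly. An alternative route, closer in spirit to \cite{GHZ}, performs column operations via Pascal's rule $\binom{n}{i}-\binom{n-1}{i}=\binom{n-1}{i-1}$ to telescope the consecutive block of columns; but the node $a+k$ must still be isolated by hand, so the Vandermonde computation seems cleaner.
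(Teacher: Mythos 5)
Your proposal is correct and takes essentially the same route as the paper. The paper scales row $i$ by $i!$ and then performs row operations to turn the falling factorials into pure powers, which is exactly your triangular factorization $A=CV$ in disguise; both proofs then finish by evaluating the same Vandermonde determinant at the nodes $a+k,\,2k-1,\dots,k$ with the same sign-and-factorial bookkeeping.
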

	\begin{proof}
		Let 
		\begin{equation*}
			B=\begin{pmatrix}
				1&1&\dots  &1\\
				a+k&2k-1&\dots  &k\\
				(a+k)(a+k-1)&(2k-1)(2k-2)&\cdots &k(k-1)\\
				\vdots&\vdots&\ddots&\vdots \\
				(a+k)\cdots(a+1)&(2k-1)\cdots k&\dots  &k!
			\end{pmatrix}.
		\end{equation*}
		Denote the $i$th row of $B$ by $Row_B(i)$. Replacing $Row_B(3)$ by $Row_B(3)+Row_B(2)$, we get the following matrix
		\begin{equation*}
			\begin{pmatrix}
				1&1&\dots  &1\\
				a+k&2k-1&\dots  &k\\
				(a+k)^2&(2k-1)^2&\cdots &k^2\\
				\vdots&\vdots&\ddots&\vdots \\
				(a+k)\cdots(a+1)&(2k-1)\cdots k&\dots  &k!
			\end{pmatrix}.
		\end{equation*}
		Through the same way, we can get the following matrix
		\begin{equation*}
			C=\begin{pmatrix}
				1&1&\dots  &1\\
				a+k&2k-1&\dots  &k\\
				(a+k)^2&(2k-1)^2&\cdots &k^2\\
				\vdots&\vdots&\ddots&\vdots \\
				(a+k)^k&(2k-1)^k&\cdots &k^k
			\end{pmatrix}.
		\end{equation*}
		It is well known that $\det(C)$ is a Vandermonde determinant, which leads to our result
		\begin{align*}
			\det(A)&=\frac{1}{\prod_{l=1}^{k}l!}\det(C)\\
			&=\frac{(-1)^{\frac{k(k+1)}{2}}}{\prod_{l=1}^{k}l!}a(a-1)\cdots(a-k+1)\prod_{k\leq i<j\leq 2k-1}(j-i)\\
			&=(-1)^{\frac{k(k+1)}{2}}\binom{a}{k}.
		\end{align*}
		This completes the proof.
	\end{proof}
	\bigskip
	\section{Proof of the main theorems}
	
	In this section, we prove the main results.
	
	\medskip
	
	{\sl Proof of Theorem \ref{3-group-rank3}.} Using Lemma \ref{lemma1}, we have $\mathsf D(G)=3\cdot 3^n-2$. By Lemma \ref{lem4} with $G=C_{3^n}^3$, it suffices to prove $\mathsf{s}_{k\cdot 3^n}(C_{3^n}^3)\leq (k+3)3^n-3$.
	
	{\bf Case 1 :} $k=3$. Let $S$ be a sequence over $C_{3^n}^3$ of length $6\cdot 3^n-3$. Let $T$ be a subsequence of $S$ with $|T|=4\cdot 3^n-3$. Using Corollary \ref{cor1} with $l=4\cdot 3^n-3$ and $\mathsf D^*(G)=3\cdot 3^n-2$, we have
	$$
	1-N^{3^n}(T)+N^{2\cdot 3^n}(T)-N^{3\cdot 3^n}(T) \equiv 0 \pmod{3}.
	$$
	It follows that
	$$
	\sum_{T|S,|T|=4\cdot 3^n-3} (1-N^{3^n}(T)+N^{2\cdot 3^n}(T)-N^{3\cdot 3^n}(T)) \equiv 0 \pmod{3}.
	$$
	Analysing the number of times each zero-sum subsequence is counted, we obtain
	$$
	1-N^{3\cdot 3^n}(S) \equiv 0 \pmod{3}.
	$$
	Therefore, $N^{3\cdot 3^n}(S)\not=0$ and $S$ contains a zero-sum subsequence of length $3\cdot 3^n-3$. Thus, $$\mathsf{s}_{3\cdot 3^n}(C_{3^n}^3)= 6\cdot 3^n-3.$$
	
	{\bf Case 2 :} $k=5$. Let $S$ be a sequence over $C_{3^n}^3$ of length $8\cdot 3^n-3$. By Lemma \ref{lem5}, we know that $S$ contains a zero-sum subsequence $T$ of length $2\cdot 3^n$. Then, $T'=ST^{-1}$ satisfies $|T'|=6\cdot 3^n-3$. Using the result above, we can get a zero-sum subsequence $T''$ of length $3\cdot 3^n$ from $T'$. Combining $T'$ and $T''$, we get a zero-sum subsequence of length $5\cdot 3^n$. Thus,
	$$
	\mathsf{s}_{5\cdot 3^n}(C_{3^n}^3)= 8\cdot 3^n-3.
	$$
	This completes the proof.	 \qed
	
	\bigskip
	{\sl Proof of Theorem \ref{work2}.} First, we prove that $\mathsf s_{\mathsf D(G)-2}(G)\geq \mathsf D(G)+1$. Using Lemma \ref{lemma1}, we have $\mathsf D(G)=rp-r+1$. Let 
	$$S_0=(1,0,\cdots,0)^{[p-1]}\bdot(0,1,\cdots,0)^{[p-1]}\bdot\dots\bdot(0,0,\cdots,1)^{[p-1]}\bdot(1,1,\cdots,1) $$
	be a sequence over $G$ of length $rp-r+1$. It is clear that it is a minimal zero-sum sequence. Thus, it does not contain a zero-sum subsequence of length at most $\mathsf D(G)-2$ and we have $\mathsf s_{\mathsf D(G)-2}(G)\geq rp-r+1+1=\mathsf D(G)+1$.
	
	Next, we prove that $\mathsf s_{\mathsf D(G)-2}(G)\leq \mathsf D(G)+1$. Let $S$ be a sequence of $G$ of length $rp-r+2$.
	Assume to the contrary that $N^i(S)=0$, for $i=1,2,\dots, rp-r-1$.
	Using Lemma \ref{lemma1}, we have
	\begin{equation}\label{eqt1}
		1+N^{rp-r}(S)-N^{rp-r+1}(S) \equiv 0 \pmod{p}.
	\end{equation}
	Let $T$ be a subsequence of $S$ with $|T|=rp-r+1$. Clearly, $N^i(T)=N^i(S)=0$, for $i=1,2,\dots, rp-r-1$. Using Lemma \ref{lemma1}, we have 
	\begin{equation*}
		1+N^{rp-r}(T)-N^{rp-r+1}(T) \equiv 0 \pmod{p}.
	\end{equation*}
	It follows that 
	$$\sum_{T|S,|T|=rp-r+1}(1+N^{rp-r}(T)-N^{rp-r+1}(T))\equiv 0 \pmod{p}.$$
	Analysing the number of times each subsequence is counted, we obtain
	\begin{equation}\label{eqt2}
		\binom{rp-r+2}{rp-r+1}+\binom{2}{1}N^{rp-r}(S)-\binom{1}{1}N^{rp-r+1}(S) \equiv 0 \pmod{p}.
	\end{equation}
	By Equations (\ref{eqt1}) and (\ref{eqt2}), we have 
	$$\begin{cases}
		N^{rp-r}(S) \equiv r-1 \pmod{p},\\
		N^{rp-r+1}(S) \equiv r \pmod{p}.
	\end{cases}
	$$
	So, there are $r$ elements that are the same in $S$. Without loss of generality, we set
	$$S=g_1\bdot g_2\bdot\dots\bdot g_{rp-2r+2}\bdot a^{[r]}.$$
	Let $T$ be a zero-sum subsequence of S of length $rp-r$. Then, $ST^{-1}$ does not contain $a$.
	Assuming $ST^{-1}=b\bdot c$, we have $b+c=a$.
	For $T'=g_2\bdot g_3\bdot\dots\bdot g_{rp-2r+2}\bdot a^{[r]}$, we have
	$$1+N^{rp-r}(T') \equiv 0 \pmod{p}.$$
	Therefore, there are exactly $p-1$ elements of $g_2\bdot g_3\bdot\dots \bdot g_{rp-2r+2}$ such that $g_1+g_i=a$.
	In the same way, for $g_j$, there are exactly $p-1$ elements of $g_1\bdot g_2\bdot \dots \bdot \hat{g}_j\bdot \dots \bdot g_{rp-2r+2}$ such that $g_j+g_i=a$.
	Without loss of generality, we suppose
	\begin{align*}
		&\begin{cases}
			g_1=g_2=\dots=g_{p-1},\\
			g_p=g_{p+1}=\dots=g_{2p-2},\\
			g_1+g_p=a,
		\end{cases}\\
		&\begin{cases}
			g_{2p-1}=g_2=\dots=g_{3p-3},\\
			g_{3p-2}=g_{3p-1}=\dots=g_{4p-4},\\
			g_{2p-1}+g_{3p-2}=a,
		\end{cases}
		\dots
	\end{align*}
	Then, we  have $(2p-2)|(rp-2r+2)$, a contradiction. So, $S$ contains a zero-sum subsequence of length at most $rp-r+1$ and we have $\mathsf{s}_{\le \mathsf D(G)-2}(G) \le \mathsf D(G)+1$.		\qed
	
	\bigskip
	{\sl Proof of Theorem \ref{work3}.}  First, we prove that $\mathsf D(G)+\left \lceil \frac{k}{r-1} \right \rceil \le \mathsf{s}_{\le \mathsf D(G)-k}(G)$. According to Lemma \ref{lemma1}, $\mathsf D(G)=rp^n-r+1$. Let 
	$$S_0=(1,0,,\cdots,0)^{[p^n-1]}\bdot \dots\bdot (0,0,\cdots,1)^{[p^n-1]}\bdot (1,1,\cdots,1)^{[\left \lceil \frac{k}{r-1} \right \rceil]}$$
	be a sequence over $G$ of length $\mathsf D(G)+\left \lceil \frac{k}{r-1} \right \rceil-1$.
	We can see the shortest zero-sum subsequence of $S_0$ is
	$$(1,0,,\cdots,0)^{[p^n-\left \lceil \frac{k}{r-1} \right \rceil]}\bdot \dots\bdot (0,0,\cdots,1)^{[p^n-\left \lceil \frac{k}{r-1} \right \rceil]}\bdot (1,1,\cdots,1)^{[\left \lceil \frac{k}{r-1} \right \rceil]}.$$
	It has length $rp^n-(r-1)\left \lceil \frac{k}{r-1} \right \rceil$ which is greater than $\mathsf D(G)-k$. Therefore, $\mathsf D(G)+\left \lceil \frac{k}{r-1} \right \rceil \le \mathsf{s}_{\le \mathsf D(G)-k}(G)$.
	
	Next, we prove $\mathsf{s}_{\le \mathsf D(G)-k}(G)\leq \mathsf D(G)+k$. Let $S$ be a sequence of G of length $rp^n-r+1+k$. Assume to the contrary that $N^i(S)=0$, for $i=1,\dots,rp^n-r+1-k$. Using Lemma \ref{lemma1}, we have
	$$1+(-1)^{rp^n-r+2-k}N^{rp^n-r+2-k}(S)+\dots+(-1)^{rp^n-r+1}N^{rp^n-r+1 }(S) \equiv 0 \pmod{p}.$$
	Let $T$ be a subsequence of S with $|T|=|S|-t$, where $t$ is an integer such that $0 \le t \le k$. Using Lemma \ref{lemma1} again, we have
	$$ 1+(-1)^{rp^n-r+2-k}N^{rp^n-r+2-k}(T)+\dots+(-1)^{rp^n-r+1}N^{rp^n-r+1}(T) \equiv 0 \pmod{p}.$$
	It follows that
	\begin{align*}
		\sum_{T|S,|T|=|S|-t} (1+(-1)^{rp^n-r+2-k}N^{rp^n-r+2-k}(T)\\
		+\dots+(-1)^{rp^n-r+1}N^{rp^n-r+1}(T)) \equiv 0 \pmod{p}.
	\end{align*}
	Analysing the number of times each subsequence is counted, we obtain
	\begin{align*}
		\binom{|S|}{|T|}+(-1)^{rp^n-r+2-k} \binom{|S|-(rp^n-r+2-k)}{|T|-(rp^n-r+2-k)} N^{rp^n-r+2-k}(S)
		\\ +\dots+(-1)^{rp^n-r+1} \binom{|S|-(rp^n-r+1)}{|T|-(rp^n-r+1)}N^{rp^n-r+1}(S)
		\\ =\binom{|S|}{t}+(-1)^{rp^n-r+2-k} \binom{|S|-(rp^n-r+2-k)}{t} N^{rp^n-r+2-k}(S)
		\\ +\dots+(-1)^{rp^n-r+1} \binom{|S|-(rp^n-r+1)}{t}N^{rp^n-r+1}(S) \equiv 0 \pmod{p}.
	\end{align*}
	Let $b:=(\binom{|S|}{0},\binom{|S|}{1},\dots,\binom{|S|}{k})^T$ and
	$$A:=\begin{pmatrix}
		\binom{2k-1}{0}&\dots  &\binom{k}{0}\\
		\binom{2k-1}{1}&\dots  &\binom{k}{1}\\
		\dots&\dots&\dots \\
		\binom{2k-1}{k}&\dots  &\binom{k}{k}
	\end{pmatrix}.
	$$
	Consider the equation in $k$ variables
	$$AX+b \equiv 0 \pmod{p}.$$
	It has a solution
	$$X=((-1)^{rp^n-r+2-k}N^{rp^n-r+2-k}(S),\dots,(-1)^{rp^n-r+1}N^{rp^n-r+1}(S))^T.$$
	Clearly, rank$(A) \le k$. On the other hand, since $k\le p-r+1$, by Lemmas \ref{lemma3} and \ref{lem6}, we have
	\begin{align*}
		\text{det}((b,A))&=(-1)^{\frac{k(k+1)}{2}}\binom{rp^n-r+1}{k} \\
		&\equiv (-1)^{\frac{k(k+1)}{2}}\binom{p-r+1}{k}\not\equiv 0 \pmod{p}.
	\end{align*}
	Thus, rank$((A,b))=k+1$, a contradiction. So, $S$ contains a zero-sum subsequence of length at most $rp^n-r+1-k$ and we have $\mathsf{s}_{\le rp^n-r+1-k}(G)\leq  rp^n-r+1+k$, i.e., $\mathsf{s}_{\le \mathsf D(G)-k}(G)\leq \mathsf D(G)+k$.   \qed
	
	\bigskip
	{\sl Proof of Theorem \ref{work6}.} First, we prove that $\mathsf{s}_{\le \mathsf D(G)-p^n}(G)\geq \mathsf D(G)+p^n$. According to Lemma \ref{lemma1}, $\mathsf D(G)=3p^n-2$. Let
	$$S_0=(1,0,0)^{[p^n-1]}\bdot(0,1,0)^{[p^n-1]}\bdot
	(0,0,1)^{[p^n-1]}\bdot(1,1,-1)^{[p^n-1]}\bdot(1,1,0)$$
	be a sequence over $G$ of length $\mathsf D(G)+p^n-1$.
	Let $T_0$ be a zero-sum subsequence of $S_0$. Then, the zero-sum subsequences $T_0$ of $S_0$ is in one of the following forms:
	\begin{itemize}
		\item $(1,0,0)^{[p^n-i-1]}\bdot(0,1,0)^{[p^n-i-1]}\bdot
		(0,0,1)^{[i]}\bdot(1,1,-1)^{[i]}\bdot(1,1,0)$;
		\item $(1,0,0)^{[p^n-i]}\bdot(0,1,0)^{[p^n-i]}\bdot
		(0,0,1)^{[i]}\bdot(1,1,-1)^{[i]}$;
		\item $(1,0,0)^{[p^n-1]}\bdot(0,1,0)^{[p^n-1]}\bdot(1,1,0)$,
	\end{itemize} 
	where $0<i\leq p^n-1$. Thus, we have
	$$|T_0|\in\{2p^n-1,2p^n\}.$$
	So, $|T_0|>\mathsf D(G)-p^n=2p^n-2$. Therefore, we have $\mathsf{s}_{\le \mathsf D(G)-p^n}(G)\geq \mathsf D(G)+p^n$.
	
	Next, we prove $\mathsf{s}_{\le \mathsf D(G)-p^n}(G)\leq \mathsf D(G)+p^n$. Let $S$ be a sequence of length $\mathsf 4p^n-2$. Assume to the contrary that $N^i(S)=0$, for $i=1,2,\dots,\mathsf 2p^n-2$. Using Lemma \ref{lemma1}, we have
	\begin{equation}\label{eqt4}
		1-N^{2p^n-1}(S)+N^{2p^n}(S)-\cdots-N^{3p^n-2}(S)+N^{4^p-2}(S)\equiv 0 \pmod p.
	\end{equation}
	Considering the subsequence of length $3p^n-2$, we have
	$$\sum_{T|S,|T|=3p^n-2} (1-N^{2p^n-1}(T)+N^{2p^n}(T)-\cdots-N^{3p^n-2}(T)) \equiv 0 \pmod{p}.$$
	It follows that
	\begin{equation}\label{eqt5}
		3-N^{2p^n-1}(S)+N^{2p^n}(S)-\cdots-N^{3p^n-2}(S)\equiv 0 \pmod p.
	\end{equation}
	Comparing Equations (\ref{eqt4}) and (\ref{eqt5}), we have $N^{4^p-2}(S) \equiv 2 \pmod p$, a contradiction. Therefore, $S$ contains a zero-sum subsequence of length at most $2p^n-2$ and we have $\mathsf{s}_{\le \mathsf D(G)-p^n}(G)\leq \mathsf D(G)+p^n$. \qed
	
	\bigskip
	{\sl Proof of Corollary \ref{work5}.} By Theorem \ref{work6}, it suffices to prove $\mathsf D(G)+p^n-1\leq \mathsf{s}_{\le \mathsf D(G)-p^n+1}(G)$. Let 
	$$S_0=(1,0,0)^{[p^n-1]}\bdot(0,1,0)^{[p^n-1]}\bdot
	(0,0,1)^{[p^n-1]}\bdot(1,1,-1)^{[p^n-1]}$$
	be a sequence of over $G$ length $\mathsf D(G)+p^n-2$. Then, the zero-sum subsequence $T_0$ of $S_0$ is in the form
	$$(1,0,0)^{[p^n-i]}\bdot(0,1,0)^{[p^n-i]}\bdot
	(0,0,1)^{[i]}\bdot(1,1,-1)^{[i]}$$
	where $0<i\leq p^n-1$. Then, we have
	$$|T_0|=(p^n-i)+(p^n-i)+i+i=2p^n>\mathsf D(G)-p^n+1=2p^n-1.$$
	So, $\mathsf D(G)+p^n-1\leq \mathsf{s}_{\le \mathsf D(G)-p^n+1}(G)$. \qed
	
	\bigskip
	{\sl Proof of Theorem \ref{work4}.}  According to Lemma \ref{lemma1}, $\mathsf D(C^3_3)=7$. By Corollary \ref{work5}, we have $9\leq \mathsf{s}_{\le 5}(C^3_3) \le 10$. It suffices to prove $\mathsf{s}_{\le 5}(C^3_3) \le 9$. 
	
	Let
	$$S=g_1\bdot g_2\bdot \dots \bdot g_9$$
	be a sequence over $C_3^3$ of length $9$ with $\sigma (S)=a$. Assume to the contrary that $N^i(S)=0$ for $i=1,2,3,4,5$. Take any subsequence $T$ of $S$ with $|T|=8$. Clearly, $N^i(T)=N^i(S)=0$ for $i=1,2,3,4,5$. Using Lemma \ref{lemma1}, we have
	$$\begin{cases}
		1+N^{6}(T)-N^{7}(T) \equiv 0 \pmod{3},\\
		\binom{8}{1}+\binom{2}{1}N^{6}(T)-\binom{1}{1}N^{7}(T) \equiv 0 \pmod{3}.
	\end{cases}
	$$
	It follows that
	$$\begin{cases}
		N^{6}(T) \equiv 2 \pmod{3},\\
		N^{7}(T) \equiv 0 \pmod{3}.
	\end{cases}
	$$
	Then, we have $N^{7}(T) = 0$, otherwise $T$ contains at least three elements that are the same. It follows that $N^7(S)=0$.
	
	For $T_1=g_3\bdot g_4\bdot \dots\bdot g_9$, we have $1+N^{6}(T_1) \equiv 0 \pmod{3}$. Then, there are two elements of $T_1$ (without lost of generality, we say they are $g_3$ and $g_4$) such that
	$$\begin{cases}
		g_1+g_2+g_3=a,\\
		g_1+g_2+g_4=a,\\
		g_3=g_4.
	\end{cases}
	$$
	Similarly, for $T_2=g_2\bdot g_4\bdot g_5\bdot \dots\bdot g_9$, we have $1+N^{6}(T_2) \equiv 0 \pmod{3}$. So, there is another element(without loss of generality, we say it is $g_5$) such that $g_1+g_3+g_5=a$.
	Analysing $T_3=g_2\bdot g_3\bdot g_5\bdot g_6\bdot \dots \bdot g_9$ in the same way, we have $g_1+g_4+g_5=a$ or $g_1+g_4+g_6=a$.
	Suppose $g_1+g_4+g_5=a$ ($g_1+g_4+g_6=a$ can be analysed similarly). Then,
	$$\begin{cases}
		g_1+g_3+g_5=a,\\
		g_1+g_4+g_5=a,\\
		g_2=g_5.
	\end{cases}
	$$
	Using the same method, without loss of generality, we have
	\begin{align*}
		\begin{cases}
			g_2+g_3+g_6=a,\\
			g_2+g_4+g_6=a,\\
			g_1=g_6,
		\end{cases}\\
		\begin{cases}
			g_2+g_5+g_7=a,\\
			g_2+g_5+g_8=a,\\
			g_7=g_8,
		\end{cases}\\
		\begin{cases}
			g_2+g_7+g_9=a,\\
			g_2+g_8+g_9=a,
		\end{cases}\\
		\begin{cases}
			g_3+g_4+g_7=a,\\
			g_3+g_4+g_8=a.
		\end{cases}
	\end{align*}
	Then, for $T=g_1\bdot g_2\bdot g_4\bdot g_5\bdot \dots\bdot g_8$, we have $g_3+g_9+g_i=0$, where $i\not=3$ or $9$. This means that $g_9=g_1=g_6$,  $g_9=g_2=g_5$ or $g_9=g_7=g_8$, a contradiction. Thus, $\mathsf{s}_{\le 5}(C^3_3) \le 9$ and we have $\mathsf{s}_{\le 5}(C^3_3)=9$. \qed
	
	\bigskip	
	\noindent {\bf Acknowledgments.}  I would like to heartily thank my advisor, Hanbin Zhang, for helpful discussions and extensive comments on the manuscript.

	\bigskip
	\bibliographystyle{amsplain}
	
\end{document}